\documentclass[12pt]{article}

\usepackage{amsmath, amsthm, amsfonts, amssymb}
\usepackage[utf8]{inputenc}
\usepackage[english]{babel}

\usepackage[colorlinks, linkcolor=blue, citecolor=blue, urlcolor=blue, breaklinks]{hyperref}

\usepackage[T1, T5]{fontenc}

\usepackage[a4paper, margin=2.7cm]{geometry}



\usepackage{cite}
\usepackage{natbib}
\usepackage{url}
\bibliographystyle{plainnat-linked-initials}


\newcommand{\ap}[1]{\left\langle#1\right\rangle}

\def\R{\mathbb{R}}

\def\d{\,\mathrm{d}}
\def\dx{\,\mathrm{d}x}
\def\ddt{\frac{\mathrm{d}}{\mathrm{d} t}}

\def\:{\colon}

\DeclareMathOperator{\sign}{sign}

\newtheorem{thm}{Theorem}[section]

\newtheorem{lem}[thm]{Lemma}
\newtheorem{prp}[thm]{Proposition}
\newtheorem{hyp}{Hypothesis}[section]
\theoremstyle{definition}

\theoremstyle{remark}
\newtheorem{rem}[thm]{Remark}


\title{Close-to-equilibrium behaviour of quadratic reaction-diffusion
  systems with detailed balance}

\date{February 2017}

\author{María J.~Cáceres \and José A.~Cañizo}


\AtEndDocument{\bigskip\bigskip{\footnotesize
  \noindent
  \textsc{María J.~Cáceres, Departamento de Matemática Aplicada,
    Universidad de Granada, 18071 Granada, Spain.}
  \textit{E-mail address}: \texttt{\href{mailto:caceresg@ugr.es}{caceresg@ugr.es}}
  
  \medskip
  \noindent
  \textsc{José A.~Cañizo, Departamento de Matemática Aplicada,
    Universidad de Granada, 18071 Granada, Spain.}
  \textit{E-mail address}:
  \texttt{\href{mailto:canizo@ugr.es}{canizo@ugr.es}}
}}

\begin{document}

\maketitle

\begin{abstract}
  We study general quadratic reaction-diffusion systems with detailed
  balance, in space dimension $d \leq 4$. We show that
  close-to-equilibrium solutions (in an $L^2$ sense) are regular for
  all times, and that they relax to equilibrium exponentially in a
  strong sense. That is: all detailed balance equilibria are
  exponentially asymptotically stable in all $L^p$ norms, at least in
  dimension $d \leq 4$. The results are given in detail for the
  four-species reaction-diffusion system, where the involved constants
  can be estimated explicitly. The main novelty is the regularity
  result and exponential relaxation in $L^p$ norms for $p > 1$, which
  up to our knowledge is new in dimensions 3 and 4.
\end{abstract}

\tableofcontents

\section{Introduction}
\label{sec:intro}

Systems of reaction-diffusion equations model a wide variety of
phenomena, and prominent among them is the behaviour of reacting
chemical mixtures. The mathematical theory of these systems is far
from complete, and in particular the existence of global regular
solutions is unknown in many important cases. If we consider $I \geq 2$ species, denoted
$A_1, \dots, A_I$, undergoing a number $R \geq 1$ of different
reactions
\begin{equation*}
  \alpha_1^r A_1 + \dots + \alpha_N^r A_I
  \
  \underset{k_b^r}{\overset{k_f^r}{\rightleftharpoons}}
  \
  \beta_1^r A_1 + \dots + \beta_N^r A_I,
  \qquad
  r = 1, \dots, R,
\end{equation*}
then the equation satisfied by the concentrations $a_i = a_i(t,x)$ of
the $A_i$ is
\begin{align}
  \label{eq:rd-general-intro}
  &\partial_t a_i = d_i \Delta a_i - R_i(a),
  \qquad
  i = 1, \dots, I,
  \\
  \label{eq:rd-general-boundary-intro}
  &\nabla_x a_i (t,x) \cdot \nu (x) = 0,
  \qquad t > 0, \ x \in \partial \Omega, \ i = 1, \dots, I.
\end{align}
The positive numbers $k_f^r$ and $k_b^r$, for $r=1 \dots, R$, denote
the forward and backward reaction rates, respectively, for each of the
$R$ reactions. The vectors
$\alpha^r = (\alpha_1^r, \dots, \alpha_I^r)$ and
$\beta^r = (\beta_1^r, \dots, \beta_I^r)$ are the \emph{stoichiometric
  coefficients} which specify the number of particles of each species
that take part in each reaction, and the \emph{reaction term} $R_i(a)$
depends on $a = (a_i)_{i=1,\dots,I}$ and is obtained from the law of
mass action. Complete details on this setting are given in Section
\ref{sec:general}, and for the moment we omit them for brevity. A very
interesting basic model that presents the main difficulties is the
following, sometimes called the \emph{four species model}: consider a
set of four chemical substances $A_1, A_2, A_3, A_4$ which undergo the
reactions
\begin{equation}
  \label{eq:reactions}
  A_1 + A_3
  \underset{k_2}{\overset{k_1}{\rightleftharpoons}}
  A_2 + A_4
\end{equation}
at positive rates $k_1$, $k_2$ as marked. We assume these substances
are confined to a domain $\Omega \subseteq \R^d$ (a connected,
bounded, open region with smooth boundary --- at least
$\mathcal{C}^{2+\alpha}$ with $\alpha > 0$), and we denote the
concentration of $A_i$ by $a_i = a_i(t,x)$, depending on time
$t \geq 0$ and space $x \in \Omega$. Except in Section
\ref{sec:general}, it is understood that the index $i$ always ranges
from $1$ to $4$. If these substances also diffuse in a domain with
diffusion constant $d_i$ corresponding to $A_i$ then the following
system models the time evolution of the concentrations $a_i$:
\begin{equation}
  \label{eq:4spec1}
  \left.
  \begin{aligned}
    \partial_t a_1 &= d_1 \Delta a_1 -k_1 a_1 a_3 + k_2 a_2 a_4,
    \\
    \partial_t a_2 &= d_2 \Delta a_2 + k_1 a_1 a_3 - k_2 a_2 a_4,
    \\
    \partial_t a_3 &= d_3 \Delta a_3 -k_1 a_1 a_3 + k_2 a_2 a_4,
    \\
    \partial_t a_4 &= d_4 \Delta a_4 + k_1 a_1 a_3 - k_2 a_2 a_4,
  \end{aligned}
  \quad \right\}
\qquad t > 0, x \in \Omega.
\end{equation}
We always assume that all $d_i$ are strictly positive. We also assume
no-flux boundary conditions which ensure that the total mass is
conserved:
\begin{equation}
  \label{eq:4spec2}
 \nabla_x a_i (t,x) \cdot \nu (x) = 0,
  \qquad t > 0, \ x \in \partial \Omega, \ i = 1, \dots, 4,
\end{equation}
where $\nu(x)$ denotes the outer normal to the boundary of $\Omega$ at
point $x$. The system \eqref{eq:4spec1} is quadratic in the
nonlinearities and satisfies the \emph{detailed balance condition}:
there is a spa\-ce-homo\-ge\-ne\-o\-u\-s equilibrium
$(a_{i,\infty})_{i=1,\dots,4}$ which makes each of the reactions
balanced, that is, it satisfies
\begin{equation*}
  k_1 a_{1,\infty} a_{3,\infty} = k_2 a_{2,\infty} a_{4,\infty}.
\end{equation*}
Since in this case there is only one reaction, it is obvious that all
space-homogeneous equilibria must satisfy this.  In general, when
detailed balance holds one can show that all equilibria must be
space-homogeneous and satisfy the same condition.

The existence of solutions and asymptotic behaviour of the system
\eqref{eq:4spec1}--\eqref{eq:4spec2} and in general
\eqref{eq:rd-general-intro}--\eqref{eq:rd-general-boundary-intro} have
been studied in a number of works, and several previous results in
reaction-diffusion systems apply to it. In general, difficulties
increase with the strength of the nonlinearities and the space
dimension. It is known that weak solutions to
\eqref{eq:4spec1}--\eqref{eq:4spec2} in $L^2([0,T) \times \Omega)$
exist in all dimensions and for all $T > 0$, and in general weak $L^2$
solutions to
\eqref{eq:rd-general-intro}--\eqref{eq:rd-general-boundary-intro}
exist as long as the system is at most quadratic and satisfies the
detailed balance condition \citep{Desvillettes2007About}. In this
paper we always work with this concept of solution. A general theory
of renormalised solutions for entropy-dissipating systems that does
not have the restriction of the system being quadratic has recently
been developed in \citet{Fischer2015}. Classical solutions are more
elusive: they exist for a short time thanks to general theory of
parabolic equations \citep{amann1985global}, and global-in-time
classical solutions are relatively well understood in a few cases. For
the system \eqref{eq:4spec1}--\eqref{eq:4spec2} the current situation
is the following:

\begin{enumerate}
  
\item Global regular solutions are known to exist in space dimension
  $d \leq 2$ \citep*{Desvillettes2006Exponential,
    Desvillettes2008Entropy, Goudon2010, Canizo2014Improved}.
  
\item Global regular solutions also exist in any space dimension
  whenever the nonlinearities are of degree $< 2$
  \citep{CV2009}. Notice that this does not apply to system
  \eqref{eq:4spec1}, which has quadratic nonlinearities.
  
\item Regular solutions are also understood in any space dimension
  when diffusion coefficients are not too far from each other
  \citep*{Canizo2014Improved,FellnerLatosSuzuki,FellnerLaamri}. They are also
  understood if the diffusion coefficients satisfy $d_1 = d_2$ and
  $d_3 = d_4$ \citep{GZ10,FGZ2014}.
  
\item Finally, there are some special cases where the nonlinearities
  contain some linear terms and allow us to study the regularity of
  some of the $a_i$ separately, and then bootstrap the regularity of
  the other ones (see \citet{Desvillettes2006Exponential}, and
  \citet{Elias2016} for a recent application of this idea).

\end{enumerate}
In general it is not known whether global classical solutions exist
for all smooth initial conditions. Regarding asymptotic behaviour, it
is expected that solutions converge exponentially fast to equilibrium
when detailed balance holds. Entropy methods have been successfully
applied to this kind of systems in almost complete generality,
obtaining exponential convergence to equilibrium in the entropic sense
and in $L^1$ \citep{Desvillettes2006Exponential,
  Desvillettes2008Entropy, GZ10, FellnerTang}. In some cases one may
also obtain $L^2$ convergence by using specifically designed
functionals (see \citet{Rionero2006, Rionero2015} and the references
therein). Relaxation to equilibrium is also understood for
\emph{linear} systems without detailed balance
\citep{Fellner2015Entropy}. Once a rate of relaxation to equilibrium
in entropy has been obtained, any slowly-growing \emph{a priori}
bounds on the solution in $L^p$ spaces or on its regularity yield
convergence to equilibrium in a stronger sense (see
\citet{Desvillettes2008Entropy} and the technique we use to prove
Theorem \ref{thm:exponential-Lp}).

In this paper we study a natural regime which seems to be mostly
missing in the literature: that of close-to-equilibrium
solutions. Some attention has been devoted to this (for example
\citep{Smoller94}) but a general result for detailed balance systems
is, to our knowledge, lacking. Our main result is that
close-to-equilibrium solutions (i.e., whose initial condition is
$L^2$-close to the equilibrium) \emph{in dimension $d \leq 4$} are
regular and converge to equilibrium exponentially fast in all $L^p$
norms including $L^\infty$, which can readily be extended to
exponential convergence in higher Sobolev norms. Notice that closeness
to equilibrium is the main assumption, and no requirements on the
diffusion coefficients $d_i$ are made except for their positivity. The
regularity result is the main novelty, since exponential convergence
to equilibrium can be obtained from it by using simple interpolation
techniques \citep{Desvillettes2006Exponential, FellnerTang}. However,
in the close-to-equilibrium regime we show that this can be obtained
by simpler methods than in the general case. We do not know whether
the limitation that $d \leq 4$ is essential or not.

The natural driving idea is that in the close-to-equilibrium regime
the behaviour is dominated by the linearisation of \eqref{eq:4spec1}
near an equilibrium. We give fully detailed arguments in the case of
the four-species model in Sections
\ref{sec:assumptions}--\ref{sec:regularity}, and later in Section
\ref{sec:general} we extend these ideas to general quadratic equations
with detailed balance.  According to this, the paper is structured as
follows: in Section \ref{sec:assumptions} we gather our precise
assumptions, notation and some preliminary results that are needed
later. In Section \ref{sec:linearised} we briefly study the linearised
system around an equilibrium, mainly gathering results which are
already known in the literature. Section \ref{sec:L2} contains
estimates on the exponential relaxation of the $L^2$ distance to
equilibrium, and these estimates are finally used in Section
\ref{sec:regularity} in order to obtain \emph{a priori} estimates of
higher $L^p$ norms, and strong convergence to equilibrium. The
$L^\infty$ estimates in particular show that close-to-equilibrium
solutions are regular. In Section \ref{sec:general} we state and prove
our theorem on general quadratic reaction-diffusion systems with
detailed balance.

\section{Preliminaries and main results}
\label{sec:assumptions}

\paragraph{Notation.}

Most of the functions we use depend on time $t \geq 0$ and space
$x \in \Omega$. When integrating we often omit the variables $(t,x)$,
and they are assumed unless we explicitly say otherwise. The
differential operators $\nabla$ and $\Delta$ always act on the
variable $x$. When $\Omega$ is regular enough, $\nu(x)$ denotes the
exterior normal to $\Omega$ at a point $x \in \partial \Omega$. As
usual, we denote by $L^p(U)$ the Lebesgue space of real $p$-integrable
functions on a Borel measurable set $U \subseteq \R^m$, for
$1 \leq p \leq \infty$, and the associated norm by $\|\cdot\|_p$. We
consider this norm defined for every measurable function, with its
value possibly being equal to $+\infty$. We will often deal with
vectors $h = (h_1,h_2,h_3,h_4) \in (L^p(\Omega))^4$, with
$\Omega \subseteq \R^d$ an open set, and then we usually denote
\begin{equation}
  \label{eq:Lp}
  \|h\|_p^p := \sum_{i=1}^4 \int_\Omega \frac{|h_i|^p}{a_{i, \infty}^{p-1}} \d x,
  \qquad
  \|\nabla h\|_p^p := \sum_{j=1}^d \sum_{i=1}^4 \int_\Omega
  \frac{|\partial_j h_i|^p}{a_{i, \infty}^{p-1}} \d x,
\end{equation}
where $(a_{i,\infty})_{i=1,\dots,4}$ are numbers defined below, and
represent the equilibrium values of the system
\eqref{eq:4spec1}--\eqref{eq:4spec2}. In the case of $p=2$, the
associated scalar product is
\begin{equation*}
  \ap{h, f} := \sum_{i=1}^4 \int_\Omega \frac{h_i f_i}{a_{i,\infty}}
  \dx
  \qquad \text{for $h, f \in (L^2(\Omega))^4$,}
\end{equation*}
for any $h, f \in (L^2(\Omega))^4$. This weighted definition of
$\|h\|_p$ is especially natural for the $L^2$ norm, which is a
Lyapunov functional for \eqref{eq:4spec1}--\eqref{eq:4spec2}. Any
other weights could be used for $p \neq 2$, but we keep these for
consistency. It should not cause any confusion with the usual notation
for $L^p$ norms, since \eqref{eq:Lp} is only used for vectors in
$(L^p(\Omega))^4$.

If $I \subseteq \R$ is an interval, we denote by $L^p(I, L^q(\Omega))$
the set of measurable functions $u \: I \to L^q(\Omega)$ such that
$\|u\|_q$ is in $L^p(I)$.

\paragraph{Main assumptions.}

Since the reaction constants $k_1, k_2$ are not essential to our
results, let us set $k_1 = k_2 = 1$ throughout to simplify the
notation. Similarly, we assume that $|\Omega| = 1$ (where $|\Omega|$
denotes its Lebesgue measure), in addition to the regularity
hypotheses mentioned in the introduction.  All of our results can
easily be extended to the case of general positive constants
$k_1, k_2$ and a general bounded domain $\Omega$ of any size.
\begin{hyp}
  \label{hyp:Omega}
  The region $\Omega \subseteq \R^d$ is nonempty, open, connected,
  bounded and with boundary of class $\mathcal{C}^{2+\alpha}$ for some
  $\alpha > 0$. We also assume $|\Omega| = 1$.
\end{hyp}

This ensures that the heat equation is well posed in the domain
$\Omega$ with Neumann boundary conditions and avoids non-essential
technicalities. We also need to assume that the diffusion
constants are strictly positive:

\begin{hyp}
  \label{hyp:diffusion>0}
  We assume $d_1, d_2, d_3, d_4$ are strictly positive real numbers.
\end{hyp}

Also, and we always consider nonnegative initial conditions which
ensure that the equilibrium is strictly positive (see below):

\begin{hyp}
  \label{hyp:data}
  The functions $a_{1,0}, a_{2,0}, a_{3,0}, a_{4,0} \: \Omega \to \R$ are in
  $L^2(\Omega)$, are nonnegative, and satisfy
  \begin{equation}
    \label{eq:data-positivity}
    \left( \int_\Omega a_{1,0} \dx \right)
    \left( \int_\Omega a_{3,0} \dx \right)
    +
    \left( \int_\Omega a_{2,0} \dx \right)
    \left( \int_\Omega a_{4,0} \dx \right)
    > 0
  \end{equation}
\end{hyp}
Equivalently: either $a_{1,0}$ and $a_{3.0}$ are both nonzero, or
$a_{2,0}$ and $a_{4,0}$ are both nonzero. This ensures that there are
enough reactants for at least one of the reactions in
\eqref{eq:reactions} to happen, and ensures that the equilibrium is
positive.

With our simplification that $k_1 = k_2 = 1$, equation
\eqref{eq:4spec1} becomes
\begin{equation}
  \label{eq:4spec1-2}
  \left.
  \begin{aligned}
    \partial_t a_1 &= d_1 \Delta a_1 - a_1 a_3 + a_2 a_4,
    \\
    \partial_t a_2 &= d_2 \Delta a_2 + a_1 a_3 - a_2 a_4,
    \\
    \partial_t a_3 &= d_3 \Delta a_3 - a_1 a_3 + a_2 a_4,
    \\
    \partial_t a_4 &= d_4 \Delta a_4 + a_1 a_3 - a_2 a_4,
  \end{aligned}
  \quad \right\}
\qquad t > 0, x \in \Omega,
\end{equation}
always with Neumann boundary conditions
\begin{equation}
  \label{eq:4spec2-2}
 \nabla_x a_i (t,x) \cdot \nu (x) = 0,
  \qquad t > 0, \ x \in \partial \Omega, \ i = 1, \dots, 4,
\end{equation}
and with initial conditions given by the $a_{i,0}$:
\begin{equation}
  \label{eq:4spec2-3}
  a_i(0,x) = a_{i,0}(x)
  \qquad x \in \Omega, \ i = 1, \dots, 4.
\end{equation}
We sometimes denote $a = (a_1, a_2, a_3, a_4)$, we write
$D(a) = (d_i \Delta a_i)_{i=1,\dots,4}$ and denote by
$N(a) = (N_1(a), N_2(a), N_3(a), N_4(a))$ the nonlinear terms on the
right hand side of \eqref{eq:4spec1-2}, so that \eqref{eq:4spec1-2}
can be written as
\begin{equation}
  \label{eq:4spec1-short}
  \partial_t a = D(a) + N(a).
\end{equation}

\paragraph{Concept of solution.}

Take $T \in (0,+\infty]$. As is standard we define a \emph{mild
  solution} (also referred to as \emph{weak solution} by some authors)
of the system \eqref{eq:4spec1-2}--\eqref{eq:4spec2-3} on $[0,T)$ to
be a set of four measurable functions
$a_i \: [0,T) \times \Omega \to \R$, $i=1,\dots,4$, such that the
products $a_1 a_3$ and $a_2 a_4$ are in $L^1([0,t) \times \Omega)$ for
all $0 < t < T$, and which satisfy
\begin{equation*}
  a_i(t,x) = e^{t d_i \Delta} a_{i,0}(t,x)
  + \int_0^t e^{(t-s) d_i \Delta} N_i(a(s,x)) \d s,
  \qquad i = 1,\dots,4
\end{equation*}
almost everywhere on $[0,T) \times \Omega$, where $e^{t \Delta}$
denotes the semigroup associated to the heat equation on $\Omega$ with
Neumann boundary conditions. A \emph{classical
  solution} or \emph{regular solution} is a set of four functions
$a_i \in \mathcal{C}([0,T) \times \overline{\Omega})$,
$i=1,\dots,4$ such that
\begin{equation*}
  \Delta a_i,\ \partial_t a_i \in \mathcal{C}^2((0,T) \times
  \overline{\Omega}),
  \qquad \text{i = 1, \dots, 4},
\end{equation*}
and which satisfy \eqref{eq:4spec1-2}--\eqref{eq:4spec2-3}
pointwise.

\paragraph{Equilibria and conserved quantities.}

The system \eqref{eq:4spec1-2}--\eqref{eq:4spec2-2} has a
three-dimensional space of conserved quantities which is generated for
example by
\begin{equation*}
 M_{12}:= \int_\Omega (a_1 + a_2) \d x,
  \qquad
M_{14}:=  \int_\Omega (a_1 + a_4) \d x
  \quad
\mbox{and} \quad
M_{32}:=  \int_\Omega (a_3 + a_2) \d x.
\end{equation*}
Other conserved quantities can be obtained from these, such as for
example the total mass
\begin{equation*}
  M : =\int_\Omega (a_1 + a_2 + a_3 + a_4) \d x.
\end{equation*}
All of these quantities are formally constant (their time derivative
is $0$). Given nonnegative initial conditions
$a_{i,0} \in L^1(\Omega)$, $i = 1, \dots, 4$, there exists a unique
positive equilibrium which we denote by $a_{i, \infty}$,
$i = 1, \dots, 4$, having the same invariants as the solution; that
is, there is a unique set of four positive numbers
$(a_{i, \infty})_{i=1, \dots, 4}$ such that
\begin{equation}
  \label{eq:equilibrium}
  \begin{gathered}
    a_{1,\infty} a_{3,\infty} = a_{2,\infty} a_{4,\infty},
    \\
    a_{1, \infty} + a_{2,\infty} = \int_\Omega (a_{1,0}
    + a_{2,0}) \d x = M_{12},
    \\
    a_{1, \infty} + a_{4,\infty} = \int_\Omega (a_{1,0}
    + a_{4,0}) \d x = M_{14},
    \\
    a_{2, \infty} + a_{3,\infty} = \int_\Omega (a_{2,0}
    + a_{3,0}) \d x = M_{32}.
  \end{gathered}
\end{equation}
Explicitly,
\begin{equation}
  \label{eq:equilibrium2}
  \begin{gathered}
    a_{1, \infty} = \frac{M_{12}M_{14}}{M},
    \quad a_{2, \infty} = \frac{M_{12}M_{32}}{M},
    \\
     a_{3,\infty} = \frac{M_{32}M_{34}}{M},
     \quad a_{4,\infty} =
    \frac{M_{14}M_{34}}{M}.
  \end{gathered}
\end{equation}

Observe that Hypothesis \ref{hyp:data} ensures that $M \neq 0$, and
that the $a_{i,\infty}$ are strictly positive, since it ensures that
all of $M_{12}$, $M_{14}$, $M_{32}$ and $M_{34}$ are positive.

\paragraph{Main result.}

Our main result for the four-species system can be summarised as
follows:

\begin{thm}
  \label{thm:exponential-Lp}
  Assume Hypotheses \ref{hyp:Omega}, \ref{hyp:diffusion>0} and
  \ref{hyp:data}, and let $d\leq 4$. Let $(a_i)_{i=1,\dots,4}$ be a
  solution to the system \eqref{eq:4spec1-2}--\eqref{eq:4spec2-2}. Let
  $a_{i,\infty}$, $i = 1, \dots, 4$, denote the only positive
  equilibrium of \eqref{eq:4spec1} with the same invariants as
  $(a_{i,0})_{i = 1, \dots, 4}$ (that is, satisfying
  \eqref{eq:equilibrium}).

  Then for any $2 \leq p < \infty$, assuming $\|a_0\|_p < +\infty$,
  there exist positive constants $\lambda, K, \epsilon > 0$ depending
  on $p$, $d$, $(d_i)_{i=1,\dots,4}$,
  $(a_{i,\infty})_{i = 1, \dots, 4}$, $\Omega$ and $\|a_0\|_p$ such
  that
  \begin{equation*}
    \sum_{i=1}^4 \int_\Omega \frac{|a_i(t,x) -
      a_{i,\infty}|^p}{a_{i,\infty}} \dx
    \leq
    K e^{-\lambda t}
  \end{equation*}
  whenever the $a_{i,0}$ satisfy
  \begin{equation*}
    \sum_{i=1}^4 \int_\Omega \frac{|a_{i,0}(x) -
      a_{i,\infty}|^2}{a_{i,\infty}} \dx
    \leq
    \epsilon.
  \end{equation*}
  As a consequence, if $a_{i,0} \in L^\infty(\Omega)$, then the
  solution to \eqref{eq:4spec1-2}--\eqref{eq:4spec2-2} is uniformly
  bounded in $L^\infty(\Omega)$, and is a classical solution for
  $t > 0$.
\end{thm}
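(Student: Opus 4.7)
\bigskip
\noindent\textbf{Proof plan.}

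The plan is to write $a_i = a_{i,\infty} + h_i$ and study the perturbation $h = (h_1, h_2, h_3, h_4)$ as the new unknown. Because $a_{1,\infty} a_{3,\infty} = a_{2,\infty} a_{4,\infty}$, each $h_i$ satisfies a system of the form $\partial_t h = \mathcal{L} h + \mathcal{Q}(h)$, where $\mathcal{L}$ is the diffusion together with the linearisation of $N$ at equilibrium, and $\mathcal{Q}(h)$ collects the purely quadratic remainder (of the form $\pm(h_1 h_3 - h_2 h_4)$ in each component). The three conservation laws in \eqref{eq:equilibrium} force $h$ to lie in a codimension-three subspace on which the linearised semigroup $e^{t \mathcal{L}}$ has an exponential decay rate $\lambda_0 > 0$, as provided by Section \ref{sec:linearised}.

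The first step, corresponding to Section \ref{sec:L2}, is to establish exponential decay of $\|h\|_2$. Differentiating and using the spectral gap of $\mathcal{L}$ one expects
\begin{equation*}
  \ddt \|h\|_2^2 \leq -2\lambda_0 \|h\|_2^2 - c_0 \|\nabla h\|_2^2 + 2 \ap{h, \mathcal{Q}(h)},
\end{equation*}
the $\|\nabla h\|_2^2$ term coming from the diffusive part of $\mathcal{L}$. I would estimate the cubic correction $\ap{h,\mathcal{Q}(h)}$ by Hölder and Gagliardo--Nirenberg, absorbing any $\|\nabla h\|_2^2$ factor by Young's inequality into $c_0 \|\nabla h\|_2^2$ and leaving a higher power of $\|h\|_2$. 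A standard continuity argument then shows that if $\|h_0\|_2$ is small enough, $\|h(t)\|_2^2 \leq \|h_0\|_2^2\, e^{-\lambda t}$ for all $t \geq 0$ and some $0 < \lambda < 2\lambda_0$, together with a bound $\int_0^\infty \|\nabla h(s)\|_2^2 \d s < +\infty$ that will feed the next step.

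The second step, corresponding to Section \ref{sec:regularity}, is a regularity bootstrap from $L^2$ to arbitrary $L^p$ and ultimately $L^\infty$. Starting from Duhamel's formula
\begin{equation*}
  h(t) = e^{t\mathcal{L}} h_0 + \int_0^t e^{(t-s) \mathcal{L}} \mathcal{Q}(h(s)) \d s,
\end{equation*}
I would iterate the heat-semigroup smoothing $\|e^{t\mathcal{L}} f\|_q \lesssim t^{-\frac{d}{2}(1/p - 1/q)}\, e^{-\lambda_0 t} \|f\|_p$ (on the invariant subspace) together with the quadratic estimate $\|\mathcal{Q}(h)\|_{p/2} \lesssim \|h\|_p^2$. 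Each stage raises the integrability index as long as the time singularity $(t-s)^{-\alpha}$ remains integrable. The hypothesis $d \leq 4$ is exactly what makes finitely many such iterations, combined with the $H^1$ control from Step 1 and Sobolev embedding $H^1 \hookrightarrow L^{2d/(d-2)}$, enough to reach arbitrary $L^p$ and finally $L^\infty$ with exponentially decaying bounds. Once $\|h(t)\|_\infty \leq K e^{-\lambda t}$ is in hand, the reaction source in \eqref{eq:4spec1-2} is bounded and Hölder continuous, and Amann's parabolic regularity theory \citep{amann1985global} promotes the mild solution to a classical one for $t > 0$.

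The main obstacle will be the regularity bootstrap in dimension $d = 4$, where the smoothing exponent $\frac{d}{2}(1/p - 1/q)$ is critical for every naive transition to $L^\infty$: iterating Duhamel from $L^2$ alone produces borderline singularities, and closing the argument requires intertwining the heat-kernel smoothing with the integrated $H^1$ dissipation from Step 1 and Gagliardo--Nirenberg interpolation, rather than a bare $L^p \to L^q$ chain. By contrast, the $L^2$ step is comparatively routine once the spectral gap of $\mathcal{L}$ and the purely quadratic structure of $\mathcal{Q}$ are available, and it is the uniform $L^\infty$ bound that upgrades weak solutions to classical ones at the end.
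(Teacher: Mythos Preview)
Your Step~1 matches the paper's approach closely: differentiate $\|h\|_2^2$, use the strengthened spectral gap $\ap{h,Th} \le -\lambda\|h\|_2^2 - \lambda\|\nabla h\|_2^2$, and control the cubic remainder via Gagliardo--Nirenberg so that a smallness assumption on $\|h_0\|_2$ closes the estimate. This is exactly Theorem~\ref{thm:L2-convergence}.

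Step~2, however, departs from the paper and has a real gap in dimension $d=4$. The paper does \emph{not} bootstrap via Duhamel; instead it computes $\frac{\mathrm d}{\mathrm d t}\|h\|_p^p$ directly (Proposition~\ref{prp:Lp-bounds}). The point is that the diffusive part then produces the dissipation $-c\,\|\nabla h^{p/2}\|_2^2$, which via Gagliardo--Nirenberg controls $\|h\|_q^p$ with $q = \tfrac{dp}{d-2}$ (so $q=2p$ when $d=4$). The nonlinear term $\|h\|_{p+1}^{p+1}$ is then interpolated between $\|h\|_2$ and $\|h\|_q$, and the crucial observation is that the exponent of $\|h\|_q$ in this interpolation is at most $p$ \emph{precisely when} $d\le 4$; the leftover factor is a positive power of $\|h\|_2$, which is small. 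Once $\|h\|_p$ is bounded (in fact $\le K(1+t)$), exponential $L^p$ decay follows by interpolating with the exponential $L^2$ decay, and $L^\infty$ comes from Lemma~\ref{lem:heat-regularisation}.

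Your Duhamel route does not obviously close at $d=4$, and the fix you propose is not enough. The integrated $H^1$ bound from Step~1 gives only $h\in L^2_t H^1_x$, hence $h\in L^2_t L^4_x$ by Sobolev, so $\mathcal Q(h)\in L^1_t L^2_x$. But with an $L^1_t$ source the convolution against the heat kernel $(t-s)^{-2(1/2-1/q)}$ cannot be placed in $L^\infty_t L^q_x$ for any $q>2$ (Young's inequality forces $q<2$), so the iteration stalls at the starting regularity. The paper's energy method avoids this because the dissipation it uses, $\|\nabla h^{p/2}\|_2^2$, is strictly stronger than $\|\nabla h\|_2^2$ and is what makes the critical case $d=4$ close. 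If you want to keep a semigroup approach, you would need something beyond the $H^1$ bound --- e.g.\ maximal regularity or a genuinely nonlinear Kato-type iteration --- and that is substantially more work than you indicate.
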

Its proof is given at the end of Section \ref{sec:regularity}. We
notice that Theorem \ref{thm:exponential-Lp} contains both a
regularity result and exponential decay of all $L^p$ norms (for
$p < \infty$). As discussed in the introduction, its main novelty is
the regularity of solutions and decay in $L^p$ norms for $p > 1$,
since decay in $L^p$ for $p = 1$ was already known in previous works
\citep{Desvillettes2006Exponential,Desvillettes2008Entropy}.

\medskip

This result can be extended with almost the same proof to a general
quadratic reaction-diffusion system with detailed balance of the form
\eqref{eq:rd-general-intro}--\eqref{eq:rd-general-boundary-intro},
always in dimension $d \leq 4$. Full details are given in Section
\ref{sec:general} and here we just highlight our main result in this
general framework. For the precise assumptions of the above result we
refer to Section \ref{sec:general}, but their essence is that the
system needs to satisfy detailed balance, be at most quadratic, and
otherwise satisfy conditions analogous to Hypotheses
\ref{hyp:Omega}--\ref{hyp:data}:

\begin{thm}
  \label{thm:exponential-Lp-general-intro}
  Assume Hypothesis \ref{hyp:Omega} and Hypotheses
  \ref{hyp:positivity}--\ref{hyp:db-general} (cf.~Section
  \ref{sec:general}), and let $d\leq 4$. Let $(a_i)_{i=1,\dots,I}$ be
  a solution to the system
  \eqref{eq:rd-general-intro}--\eqref{eq:rd-general-boundary-intro}
  with initial condition $a_0 = (a_{0,i})_{i=1, \dots, I}$. Let
  $a_{i,\infty}$, $i = 1, \dots, I$, denote a detailed balance
  equilibrium of \eqref{eq:rd-general-intro} with the same invariants
  as $(a_{i,0})_{i = 1, \dots, I}$.

  Then for any $2 \leq p < \infty$ there exist positive constants
  $\lambda, K, \epsilon > 0$ depending on $p$, $d$, $(d_i)_{i=1,\dots,I}$,
  $(a_{i,\infty})_{i = 1, \dots, I}$, $\Omega$ and $\|a_0\|_p$ such
  that
  \begin{equation*}
    \sum_{i=1}^I \int_\Omega \frac{|a_i(t,x) -
      a_{i,\infty}|^p}{a_{i,\infty}} \dx
    \leq
    K e^{-\lambda t}
  \end{equation*}
  whenever the $a_{i,0}$ satisfy
  \begin{equation*}
    \sum_{i=1}^I \int_\Omega \frac{|a_{i,0}(x) -
      a_{i,\infty}|^2}{a_{i,\infty}} \dx
    \leq
    \epsilon.
  \end{equation*}
  As a consequence, if $a_{i,0} \in L^\infty(\Omega)$, then the
  solution to
  \eqref{eq:rd-general}--\eqref{eq:initial-condition-general} is
  uniformly bounded in $L^\infty(\Omega)$, and is a classical solution
  for $t > 0$.
\end{thm}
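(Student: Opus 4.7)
The strategy is to follow the proof of Theorem \ref{thm:exponential-Lp} essentially verbatim, once every step there has been re-examined and shown to rely only on the structural features now encoded in Hypotheses \ref{hyp:positivity}--\ref{hyp:db-general}: detailed balance, at most quadratic reaction terms, mass conservation, and the existence of a strictly positive equilibrium $a_\infty = (a_{i,\infty})_{i=1,\dots,I}$ with the prescribed invariants. Accordingly, the argument splits into three stages mirroring Sections \ref{sec:linearised}--\ref{sec:regularity}: linearisation with a spectral gap, exponential $L^2$ decay of $h := a - a_\infty$, and bootstrap to $L^p$ and $L^\infty$ via smoothing of the heat semigroup.

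For the first two stages I would write $\partial_t h = D h + L h + Q(h)$, where $L$ is the reaction linearisation at $a_\infty$ and $Q$ collects the quadratic remainder. The detailed balance condition makes $L$ self-adjoint and nonpositive with respect to the weighted inner product $\ap{\cdot,\cdot}$, with kernel equal to the span of the conserved quantities; combined with the Neumann spectral gap of $\Delta$, this yields a uniform spectral gap $\mu > 0$ for $-(D+L)$ on the $L^2$-orthogonal complement of that kernel. Testing the full equation against $h$ in this inner product and bounding the cubic contribution $\ap{Q(h),h}$ by $C\|h\|_\infty \|h\|_2^2$ gives
\begin{equation*}
  \ddt \|h\|_2^2 \leq -\bigl(2\mu - C \|h\|_\infty\bigr) \|h\|_2^2,
\end{equation*}
so provided $\|h\|_\infty$ stays small enough---which the smallness of $\epsilon$ must eventually guarantee---one gets exponential $L^2$ decay, to be closed simultaneously with the $L^\infty$ bound by a continuity argument in $t$.

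The required $L^\infty$ bound comes from iterating the mild formulation
\begin{equation*}
  h_i(t) = e^{t d_i \Delta} h_{i,0} + \int_0^t e^{(t-s) d_i \Delta} (L h + Q(h))_i(s) \d s,
\end{equation*}
and applying the standard $L^q \to L^p$ smoothing estimates of the Neumann heat semigroup. The dimension restriction $d \leq 4$ enters decisively at this stage: because $Q$ is quadratic, one must control products $h_i h_j$ in $L^p$ from $L^2$-type bounds on $h$, and the Sobolev / Gagliardo--Nirenberg embeddings needed for this scale work precisely in this range. Exponential $L^2$ decay then propagates to exponential $L^p$ and $L^\infty$ decay exactly as in Section \ref{sec:regularity}, and uniform $L^\infty$ control upgrades the mild solution to a classical solution by standard parabolic Schauder theory.

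The main obstacle---and the only place where the general case requires genuine work beyond reading off the four-species argument---is verifying that $L$ is self-adjoint and nonpositive with the correct kernel for an arbitrary family of stoichiometric vectors $(\alpha^r, \beta^r)_{r=1,\dots,R}$ satisfying detailed balance. The right viewpoint is to write both $L$ and $Q$ as sums indexed by the reactions $r$, with each term weighted by the common quantity $k_f^r a_\infty^{\alpha^r} = k_b^r a_\infty^{\beta^r}$ dictated by detailed balance; this factorisation expresses the quadratic form of $-L$ as a weighted sum of squares of linear combinations of $h_i/a_{i,\infty}$, after which every subsequent estimate from the four-species proof transfers with constants depending only on $I$, $R$, the stoichiometric data, $(d_i)$, $a_\infty$, $\Omega$ and $\|a_0\|_p$, as claimed.
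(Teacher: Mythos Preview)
Your identification of the linearised structure is correct and matches the paper: $L$ is self-adjoint and nonpositive in the weighted inner product, with kernel given by the span of the conserved quantities, and this follows from writing the quadratic form of $-L$ as a sum over reactions of squares of $\sum_i(\alpha_i^r-\beta_i^r)h_i/a_{i,\infty}$, each weighted by $k_f^r a_\infty^{\alpha^r}$. That part is fine.

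The gap is in the $L^2$ step. You bound the cubic term by $\ap{Q(h),h}\le C\|h\|_\infty\|h\|_2^2$ and then need $\|h\|_\infty$ small, to be obtained ``eventually'' from the smallness of $\epsilon$. But the hypothesis is only $\|h_0\|_2\le\epsilon$, with $\|a_0\|_p$ finite but \emph{not} small; there is no smallness of $\|h_0\|_\infty$ available, and heat-semigroup smoothing cannot manufacture it without already controlling the nonlinear contribution in a norm strong enough to feed into $L^\infty$. This makes the continuity argument you allude to circular unless you add a substantial new ingredient.

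The paper avoids this entirely. The crucial point is to use the \emph{strengthened} spectral gap inequality
\[
\ap{h,Th}\le -\lambda\|h\|_2^2-\lambda\|\nabla h\|_2^2,
\]
(Lemma~\ref{lem:gap-strong} and its general analogue), so the energy identity carries a gradient term. One then bounds the cubic contribution by $\ap{h,Q(h)}\le C\|h\|_3^3$ (Hölder only) and controls $\|h\|_3$ through Gagliardo--Nirenberg with $\theta=d/6$:
\[
\|h\|_3\le C\|\nabla h\|_2^{\theta}\|h\|_2^{1-\theta}+C\|h\|_2.
\]
In dimension $d\le4$ the resulting power balance lets the gradient term absorb the cubic one whenever $\|h\|_2$ is below an explicit threshold, closing the $L^2$ estimate with \emph{no} $L^\infty$ input at all. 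Only afterwards does one pass to $L^p$ via direct energy estimates on $\|h\|_p^p$ (again using the diffusive gradient term and interpolation), and finally to $L^\infty$ by Lemma~\ref{lem:heat-regularisation}. Your mild-formulation bootstrap and simultaneous $L^2$/$L^\infty$ closure are not the mechanism the paper uses, and as written they do not close under the stated hypotheses.
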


The main restrictions and shortcomings of this result are the
following:
\begin{enumerate}
\item As opposed to Theorem \ref{thm:exponential-Lp}, the constant
  $\lambda$ cannot be estimated constructively in Theorem
  \ref{thm:exponential-Lp-general-intro}. Both results rely on a
  spectral gap of the linearised system. In full generality one can
  prove the existence of this positive spectral gap, but we can give
  no estimate on its size. In this generality, estimating
  this spectral gap in any constructive way is a research area by
  itself and is out of the scope of this paper.
  
\item The proof only works in dimension $d \leq 4$, and for reactions
  which involve at most two reactants on each side of the
  reaction. The reason for this, as in the four-species case, is that
  the argument showing the nonlinear perturbations close to
  equilibrium are negligible relies on Sobolev embedding estimates
  which lead to a restriction on the exponents.
  
\item The proof works only for the stability of a detailed balance
  equilibrium, for which the relative entropy (or relative free
  energy) is a known Lyapunov functional of the system. It is possible
  that these results can be extended to systems with complex balance,
  for which there is still an entropy structure, but we have not
  followed that extension.
\end{enumerate}

On the other hand, with the above restrictions the result is fully
general: detailed balance equilibria for quadratic systems in
dimension $d \leq 4$ are always exponentially asymptotically stable,
at least in a neighbourhood of the equilibrium. This applies even if
more than one equilibrium exists, in particular when boundary
equilibria exist (equilibria with some component equal to $0$). This
local stability is known to hold for the system without diffusion
\citep{Horn1972General}, so our contribution is the observation that
diffusion does not change this behaviour for quadratic systems, in
dimension $d \leq 4$.

\paragraph{Entropy.}

Since the system \eqref{eq:4spec1-2} (or \eqref{eq:4spec1}) satisfies
the detailed balance condition, it is well-known that the
\emph{relative free energy} is nonincreasing: if we define it by
\begin{equation*}
  H(a|a_\infty) := \sum_{i=1}^4 \int_\Omega \left(
    a_i \log \frac{a_i}{a_{i,\infty}}
     - a_i + a_{i,\infty}
    \right)
\end{equation*}
then we have
\begin{equation*}
  \ddt H(a|a_\infty) =
  - \sum_{i=1}^4 \int_\Omega d_i \frac{|\nabla a_i|^2}{a_i} \d x
  - \int_\Omega \Psi \left( a_1 a_3, \, a_2 a_4 \right)
  \dx
\end{equation*}
for any solution $a$ to \eqref{eq:4spec1-2}--~\eqref{eq:4spec2-2},
where
\begin{equation*}
  \Psi(a,b) := (a-b) (\log a - \log b)
\end{equation*}
for $a, b > 0$.

\paragraph{Gagliardo-Nirenberg inequalities.}

A basic tool is the so-called \emph{Gagliardo-Nirenberg} inequality,
which we state on a bounded domain $\Omega$, and in the particular
case we will use. This statement can be found in
\citet[pp.~125--126]{nirenberg1959elliptic}; see also \citet[Theorem
9.3]{FriedmanPDE}:

\begin{lem}[Gagliardo-Nirenberg inequality]
  \label{lem:GNS}
  Let $\Omega \subseteq \R^d$ be a nonempty, open, bounded domain with
  Lipschitz boundary. Take $1 \leq q, s < \infty$ and assume $p,
  \theta$ satisfy
  \begin{equation*}
    \frac{1}{p} = \theta \left( \frac{1}{q} - \frac{1}{d} \right)
    + (1-\theta) \frac{1}{s},
    \qquad
    0 \leq \theta < 1,
    \qquad
    1 \leq p < +\infty.
  \end{equation*}
  Then there exists a constant $C > 0$ depending only on $d$, $q$,
  $r$, $p$ and the domain $\Omega$ such that
  \begin{equation}
    \label{eq:GNS}
    \| u \|_p \leq C \|\nabla u\|_q^\theta \| u \|_s^{(1-\theta)} + C\|u\|_1
  \end{equation}
  for all $u \in L^1(\Omega)$.
\end{lem}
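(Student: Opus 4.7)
The plan is to establish the inequality first on all of $\R^d$ for smooth compactly supported functions (where no $\|u\|_1$ correction appears), and then transfer to the bounded domain $\Omega$ via an extension operator. The scaling identity in the hypothesis is dimensionless precisely so that the Euclidean inequality holds with no zeroth-order correction, while the additive $\|u\|_1$ term on $\Omega$ accounts for the constants that lie in the kernel of $\nabla$ and are missed by the gradient seminorm alone.

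For the Euclidean version, the core ingredient is the Gagliardo inequality $\|u\|_{d/(d-1)} \leq C\|\nabla u\|_1$ for $u \in C_c^\infty(\R^d)$, proved by representing $u(x)$ as the integral of $\partial_i u$ along the $i$-th coordinate axis and applying an iterated H\"older inequality. Applying this to $|u|^\gamma$ for a suitable $\gamma>1$ and again invoking H\"older produces the Sobolev embedding $\|u\|_{q^*} \leq C\|\nabla u\|_q$ for $1 \leq q < d$, with $q^* = dq/(d-q)$. The scaling condition in the lemma is precisely $\tfrac{1}{p} = \tfrac{\theta}{q^*} + \tfrac{1-\theta}{s}$, so H\"older's inequality in Lebesgue spaces yields $\|u\|_p \leq \|u\|_{q^*}^\theta \|u\|_s^{1-\theta}$, and combining with the Sobolev embedding gives the Euclidean bound. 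The case $q \geq d$ is recovered either by picking $q' < d$ arbitrarily close to $d$ and interpolating, or by using the Morrey embedding together with the same Lebesgue interpolation.

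To transfer to $\Omega$, I would use the Stein (or Calder\'on) extension theorem, valid since $\partial\Omega$ is Lipschitz, to obtain a bounded linear operator $E \colon W^{1,q}(\Omega) \cap L^s(\Omega) \to W^{1,q}(\R^d) \cap L^s(\R^d)$ whose image is supported in a fixed bounded neighbourhood of $\Omega$. Applying the Euclidean inequality to $Eu$ and restricting to $\Omega$ gives a bound in terms of $\|\nabla(Eu)\|_q$ and $\|Eu\|_s$. The extension estimate controls $\|\nabla(Eu)\|_q$ by $C(\|\nabla u\|_q + \|u\|_q)$, so to eliminate the unwanted $\|u\|_q$ factor I would first apply the argument to $u - \bar u$, where $\bar u = \int_\Omega u$, and use the Poincar\'e inequality $\|u - \bar u\|_q \leq C\|\nabla u\|_q$. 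Writing $\|u\|_p \leq \|u - \bar u\|_p + |\bar u|\,|\Omega|^{1/p}$ and bounding $|\bar u| \leq \|u\|_1$ then produces the stated form with an additive $C\|u\|_1$ correction.

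The main obstacle is the bookkeeping for cross-terms arising after extension and interpolation: the raw bound produces a mixed term of the form $\|\nabla u\|_q^\theta \|u\|_1^{1-\theta}$, which must be split via Young's inequality into $\varepsilon\|\nabla u\|_q^\theta\|u\|_s^{1-\theta} + C_\varepsilon \|u\|_1$ to match exactly the form in the statement, and the scaling identity must be verified to remain consistent at each step. The final density argument extending from $C_c^\infty$ to general $u \in L^1(\Omega)$ with $\nabla u \in L^q(\Omega)$ is routine.
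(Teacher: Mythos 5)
The paper does not prove this lemma at all: it is quoted from the literature, with pointers to Nirenberg's original article and to Friedman's book, so any proof you give is necessarily a ``different route.'' Your route is the standard textbook one --- prove the scale-invariant inequality on $\R^d$ via the Gagliardo $L^1$-Sobolev estimate plus H\"older interpolation (the hypothesis $\tfrac1p=\theta(\tfrac1q-\tfrac1d)+(1-\theta)\tfrac1s$ is indeed exactly $\tfrac1p=\tfrac{\theta}{q^*}+\tfrac{1-\theta}{s}$), then transfer to $\Omega$ by a Stein extension, using Poincar\'e on $u-\bar u$ to remove the lower-order $\|u\|_q$ produced by the extension bound and recovering the mean through the additive $C\|u\|_1$. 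This architecture is sound and does yield the stated inequality; it is also consistent with the paper's remark that the $\|u\|_1$ term could likely be dropped if one replaced $u$ by $u-\bar u$ on the left.

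Two points need repair. First, your ``main obstacle'' is not an obstacle, and the tool you propose for it is the wrong one: Young's inequality applied to $\|\nabla u\|_q^\theta\|u\|_1^{1-\theta}$ produces a term proportional to $\|\nabla u\|_q$, which does not appear in the target inequality. In fact the cross term never needs splitting, because on a bounded domain $|\bar u|\,|\Omega|^{1/s}\le\|u\|_s$ by H\"older, so $\|u-\bar u\|_s\le 2\|u\|_s$ and the only leftover is the harmless $|\bar u|\,|\Omega|^{1/p}\le C\|u\|_1$. Second, and more substantively, the case $q\ge d$ is not covered by ``pick $q'<d$ close to $d$'': replacing $q$ by $q'$ changes the interpolation parameter, so you obtain the inequality with some $\theta'\ne\theta$, and the two forms are not comparable without knowing which of $\|\nabla u\|_q$, $\|u\|_s$ dominates. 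This matters here because the paper applies the lemma with $q=2$ in dimension $d=2$, i.e.\ exactly at the borderline $q=d$ (e.g.\ $\|h\|_3\le C\|\nabla h\|_2^{1/3}\|h\|_2^{2/3}+C\|h\|_2$). The borderline case requires a genuine argument, for instance the iterated Gagliardo estimate applied to powers $|u|^\gamma$ (giving $\|u\|_{\gamma d/(d-1)}^{\gamma}\le C\gamma\,\|u\|_{(\gamma-1)d/(d-1)}^{\gamma-1}\|\nabla u\|_d$ and then interpolating), or an appeal to the limiting embedding $W^{1,d}\hookrightarrow L^r$ for all $r<\infty$ with the correct multiplicative structure. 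With those two repairs the proof is complete.
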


\begin{rem}
  Observe that the right hand side, or both sides, may be equal to
  $+\infty$. In the above theorem we have avoided the special cases
  $\theta = 1$ or $q=\infty$, which require additional hypotheses to
  hold and will not be used in this paper. The term $\|u\|_1$ on the
  right hand side is needed due to the fact that we are on a bounded
  domain (otherwise a nonzero constant is a counterexample to the
  inequality). Other versions of this inequality in a bounded domain
  involve the Sobolev norm in $W^{1,q}$ instead of $\|\nabla u\|_q$
  (see \citet[p.~233]{Brezis}), and it is likely that the same
  statement holds without the $\|u\|_1$ term if one writes
  $u - \overline{u}$ instead of $u$ on the left hand side, with
  $\overline{u} := \frac{1}{|\Omega|}\int_\Omega u$. We have not been
  able to find a precise reference that contains these results in full
  generality.
\end{rem}

\paragraph{Parabolic regularisation of the heat equation.}

We will use some relatively well-known results on regularisation in
parabolic equations. For completeness, and since precise references
for some of them are not easily found, we gather them here. We first
state a standard $L^p-L^q$ regularisation property of the heat
equation, and then use it to show $L^\infty$ regularisation of the
heat equation with a source term under certain conditions.

\begin{lem}
  \label{lem:heat}
  Assume $\Omega \subseteq \R^d$ satisfies Hypothesis
  \ref{hyp:Omega}. Consider the solution $u$ to the heat equation on
  $\Omega$ with Neumann boundary conditions and initial condition
  $u_0 \in L^q(\Omega)$, with $1 \leq q < \infty$:
  \begin{equation}
    \label{eq:heat}
    \left.
      \begin{aligned}
        &\partial_t u = \Delta u, \quad && t > 0,\ x \in \Omega,
        \\
        &\nabla u \cdot \nu = 0, \quad && t > 0,\ x \in \partial \Omega
        \\
        &u(0,x) = u_0(x), \quad && x \in \Omega.
      \end{aligned}
      \quad \right\}
  \end{equation}
  For every $r$ with $q \leq r \leq \infty$ it holds that
  \begin{equation*}
    \|u (t,\cdot) \|_r \leq C \|u_0\|_q
    \left(
      1 + t^{-\frac{d}{2} \left(
          \frac{1}{q} - \frac{1}{r} \right)}
      \right)
    \qquad \text{ for all $t > 0$,}
  \end{equation*}
  for some constant $C = C(d,r,q) > 0$.
\end{lem}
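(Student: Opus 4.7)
The plan is to split the time interval into $(0,1]$ and $[1,\infty)$ and combine an ultracontractive estimate at short time with the $L^p$-contractivity of the Neumann heat semigroup at long time.

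For $0 < t \leq 1$, I would first establish the endpoint $L^1 \to L^\infty$ bound. Under Hypothesis \ref{hyp:Omega}, classical parabolic theory gives a pointwise upper bound $p^N_t(x,y) \leq C t^{-d/2}$ for small $t$ on the Neumann heat kernel (this uses the $\mathcal{C}^{2+\alpha}$ regularity of $\partial\Omega$, e.g.\ via Nash--Aronson estimates, or via localisation and reflection near the boundary). Integrating the kernel yields $\|e^{t\Delta} u_0\|_\infty \leq C t^{-d/2} \|u_0\|_1$. Independently, $e^{t\Delta}$ is a contraction on every $L^p(\Omega)$: multiplying the heat equation by $|u|^{p-2} u$ and integrating in $x$, the boundary term vanishes thanks to the Neumann condition and the remaining term has the correct sign, so $\|e^{t\Delta} u_0\|_p \leq \|u_0\|_p$. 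Riesz--Thorin interpolation between the $L^1 \to L^\infty$ bound and the $L^q \to L^q$ contraction then yields
\begin{equation*}
  \|e^{t\Delta} u_0\|_r \leq C\, t^{-\frac{d}{2}\left(\frac{1}{q} - \frac{1}{r}\right)} \|u_0\|_q
  \qquad \text{for } 0 < t \leq 1,\ q \leq r \leq \infty.
\end{equation*}

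For $t \geq 1$, I would use the semigroup property to write $e^{t\Delta} u_0 = e^{\Delta}\bigl(e^{(t-1)\Delta} u_0\bigr)$. The $L^q$-contractivity gives $\|e^{(t-1)\Delta} u_0\|_q \leq \|u_0\|_q$, and applying the short-time bound at $t=1$ to the result gives $\|e^{t\Delta} u_0\|_r \leq C \|u_0\|_q$, a $t$-independent estimate. Combining the two regimes into a single inequality produces the stated bound, the $t \geq 1$ contribution being absorbed into the additive $1$ inside the parentheses.

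The only real difficulty lies in the short-time ultracontractive estimate: on a general bounded open set the Neumann heat kernel need not have pointwise Gaussian upper bounds, so some amount of boundary regularity is essential here, and this is precisely where the hypothesis $\partial\Omega \in \mathcal{C}^{2+\alpha}$ is used. If I preferred to avoid invoking heat-kernel bounds directly, an equivalent route would be to prove a Nash-type inequality of the form $\|u\|_2^{2+4/d} \leq C \|\nabla u\|_2^2 \|u\|_1^{4/d} + C\|u\|_1^{2+4/d}$ on $\Omega$ and run Nash's moment argument to obtain the $t^{-d/4}$ decay of $\|e^{t\Delta}\|_{L^1 \to L^2}$, which by duality and composition recovers the $L^1 \to L^\infty$ estimate. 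Once the small-time bound is in hand, the remainder is routine interpolation and semigroup bookkeeping.
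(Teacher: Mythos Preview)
The paper does not actually give a proof of this lemma: it is stated as a ``standard $L^p$--$L^q$ regularisation property of the heat equation'' and then used as a black box in the proof of Lemma~\ref{lem:heat-regularisation}. So there is no paper proof to compare against.

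Your argument is correct and is the standard one. One small point of imprecision: a \emph{single} application of Riesz--Thorin between the $L^1\to L^\infty$ bound and the $L^q\to L^q$ contraction does not directly yield the $L^q\to L^r$ estimate (the interpolated source and target exponents would not both hit $q$ and $r$). What one actually does is a two-step interpolation: first interpolate $L^1\to L^\infty$ with $L^\infty\to L^\infty$ to get $L^q\to L^\infty$ with norm $\leq C t^{-d/(2q)}$, then interpolate that against $L^q\to L^q$ to reach $L^q\to L^r$ with norm $\leq C t^{-\frac{d}{2}(\frac{1}{q}-\frac{1}{r})}$. This is routine and your intended meaning is clear, but it is worth spelling out if you write it up carefully. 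The large-time argument via the semigroup property and contractivity is fine, and your remarks on the necessity of boundary regularity and on the alternative Nash-inequality route are apt.
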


\begin{lem}
  \label{lem:heat-regularisation}
  Assume $\Omega \subseteq \R^d$ satisfies Hypothesis \ref{hyp:Omega},
  and take $f \in L^\infty_{\mathrm{loc}}([0,\infty), L^q(\Omega))$ for some
  $1 \leq q \leq \infty$. Consider the solution $u$ to the following
  heat equation on $\Omega$ with a source term $f$ and initial
  condition $u_0 \in L^\infty(\Omega)$:
  \begin{equation}
    \label{eq:heat+source}
    \left.
      \begin{aligned}
        &\partial_t u = \Delta u + f, \quad && t > 0,\ x \in \Omega,
        \\
        &\nabla u \cdot \nu = 0, \quad && t > 0,\ x \in \partial \Omega
        \\
        &u(0,x) = u_0(x), \quad && x \in \Omega.
      \end{aligned}
      \quad \right\}
  \end{equation}
  If $q > d/2$ then
  $u \in L^\infty_{\mathrm{loc}}([0,+\infty), L^\infty(\Omega))$.
\end{lem}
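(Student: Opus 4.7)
The natural plan is to use Duhamel's formula to represent $u$ as the sum of the homogeneous heat evolution of $u_0$ plus a convolution in time of the heat semigroup against the source $f$, and then bound each piece in $L^\infty$ using Lemma \ref{lem:heat}.

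Concretely, write
\begin{equation*}
  u(t,\cdot) = e^{t \Delta} u_0 + \int_0^t e^{(t-s) \Delta} f(s,\cdot) \, \d s.
\end{equation*}
The homogeneous term is harmless: the heat semigroup with Neumann boundary conditions is a contraction on $L^\infty(\Omega)$ (it preserves constants and is order-preserving), so $\|e^{t \Delta} u_0\|_\infty \leq \|u_0\|_\infty$ uniformly in $t \geq 0$. The work is therefore in the Duhamel integral.

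For the Duhamel term, I would apply Lemma \ref{lem:heat} with initial datum $f(s,\cdot) \in L^q(\Omega)$ and target exponent $r = \infty$ to obtain
\begin{equation*}
  \left\| e^{(t-s) \Delta} f(s,\cdot) \right\|_\infty
  \leq C \|f(s,\cdot)\|_q \left( 1 + (t-s)^{-\frac{d}{2q}} \right).
\end{equation*}
Taking the $L^\infty$ norm inside the time integral (by Minkowski's inequality) and using that $\|f(s,\cdot)\|_q$ is bounded by some constant $M_T$ on $[0,T]$, we get
\begin{equation*}
  \left\| \int_0^t e^{(t-s)\Delta} f(s,\cdot) \, \d s \right\|_\infty
  \leq C M_T \int_0^t \left( 1 + (t-s)^{-\frac{d}{2q}} \right) \d s
\end{equation*}
for every $t \in [0,T]$.

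The only potential obstacle is the integrability of the singularity at $s = t$, and this is exactly where the assumption $q > d/2$ enters: it ensures $\frac{d}{2q} < 1$, so $(t-s)^{-d/(2q)}$ is integrable on $[0,t]$ and the right-hand side is bounded by a constant depending only on $T$, $d$, $q$ and $M_T$. Combining this with the contraction estimate for $e^{t\Delta}u_0$ yields $\sup_{t \in [0,T]} \|u(t,\cdot)\|_\infty < \infty$, which is the claim. The rest is routine bookkeeping, so I expect no genuinely hard step — the proof reduces to verifying that the exponent of the singularity is less than $1$ under the stated hypothesis.
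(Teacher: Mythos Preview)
Your proposal is correct and follows essentially the same approach as the paper: Duhamel's formula, the $L^\infty$ contraction (maximum principle) for the homogeneous part, Lemma~\ref{lem:heat} with $r=\infty$ for the inhomogeneous part, and integrability of $(t-s)^{-d/(2q)}$ from $q>d/2$.
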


\begin{proof}
  We use Duhamel's formula to write
  \begin{equation*}
    u(t,x) = (e^{t \Delta} u_0)(x) + \int_0^t (e^{(t-s)\Delta} f_s)(x) \d s,
  \end{equation*}
  where $f_s(x) \equiv f(s,x)$ and $e^{t\Delta}$ denotes the heat
  semigroup on $L^1(\Omega)$ at time $t \geq 0$ with Neumann boundary
  conditions. Taking the $L^\infty$ norm and using both Lemma
  \ref{lem:heat} and the maximum principle for the heat semigroup,
  \begin{multline*}
    \|u(t,\cdot)\|_\infty
    \leq
    \|u_0\|_\infty + \int_0^t \| e^{(t-s)\Delta} f_s \|_\infty \d s
    \\
    \leq
    \|u_0\|_\infty + C \int_0^t
    \left(1 + (t-s)^{-\frac{d}{2q}} \right)
    \| f_s \|_q \d s
    \\
    \leq
    \|u_0\|_\infty
    +
    C C_f
    \int_0^t
    \left(1 + (t-s)^{-\frac{d}{2q}} \right)
    \d s,
  \end{multline*}
  where $C_f$ is the norm of $f$ in
  $L^\infty((0,\infty), L^q(\R^d))$. The assumption that $q > d/2$
  shows that the last integral is finite and gives the $L^\infty$
  bound on the solution.
\end{proof}

\section{Linearised system and spectral gap}
\label{sec:linearised}

We begin our study by considering the linearisation of
\eqref{eq:4spec1-2} around the equilibrium
$(a_{i, \infty})_{i=1, \dots, 4}$. If we set
\begin{equation*}
  a_i = a_{i,\infty} + h_i,
  \qquad i = 1,\dots, 4,
\end{equation*}
then the perturbations from equilibrium $h_i$ satisfy the following
system of equations to first order:
\begin{equation}
  \label{eq:4spec1-linearised}
  \left.
  \begin{aligned}
    \partial_t h_1 &= d_1 \Delta h_1 
    - a_{3,\infty} h_1 - a_{1,\infty}h_3 + a_{4,\infty} h_2 + a_{2,\infty} h_4,
    \\
    \partial_t h_2 &= d_2 \Delta h_2 
    + a_{3,\infty} h_1 + a_{1,\infty}h_3 - a_{4,\infty} h_2 - a_{2,\infty} h_4,
    \\
    \partial_t h_3 &= d_3 \Delta h_3 
    - a_{3,\infty} h_1 - a_{1,\infty}h_3 + a_{4,\infty} h_2 + a_{2,\infty} h_4,
    \\
    \partial_t h_4 &= d_4 \Delta h_4 
    + a_{3,\infty} h_1 + a_{1,\infty}h_3 - a_{4,\infty} h_2 - a_{2,\infty} h_4,
  \end{aligned}
  \quad \right\}
\end{equation}
for $t > 0, x \in \Omega$, with the same boundary conditions:
\begin{equation}
  \label{eq:4spec2-linearised}
 \nabla_x h_i (t,x) \cdot \nu (x) = 0,
  \qquad t > 0, \ x \in \partial \Omega, \ i = 1, \dots, 4.
\end{equation}
The conserved quantities of system \eqref{eq:4spec1-2} translate into
the following property for $(h_i)_i$:
\begin{equation}
  \label{eq:h-conservation}
    \int_\Omega (h_1 + h_2) \d x
    =
    \int_\Omega (h_{1} + h_{4}) \d x
    =
    \int_\Omega (h_{2} + h_{3}) \d x
    = 0.
\end{equation}
In particular, $\sum_{i=1}^4 \int_\Omega h_i \dx = 0$. We shorten the
notation of system \eqref{eq:4spec1-linearised} in a similar way as in
\eqref{eq:4spec1-short} by writing
\begin{equation}
  \label{eq:Li}
  \partial_t h_i = d_i \Delta h_i+ L_i h,
  \qquad i = 1, \dots, 4,
\end{equation}
where $h \equiv (h_1, h_2, h_3, h_4)$ and
\begin{equation}
  \label{eq:6}
  L_i h:=(-1)^i
  (a_{3,\infty} h_1 + a_{1,\infty}h_3 - 
  a_{4,\infty} h_2 - a_{2,\infty} h_4).
\end{equation}
Calling $L h := (L_1 h, L_2 h, L_3 h, L_4 h)$ we may write this more
abstractly as
\begin{equation}
  \label{eq:9}
  \partial_t h = D h+L h =: Th.
\end{equation}

Since the equations \eqref{eq:4spec1} satisfy the detailed balance
conditions, the free energy of the system
\eqref{eq:4spec1-2}--\eqref{eq:4spec2-2} is nonincreasing in time and
the system \eqref{eq:4spec1-linearised}--\eqref{eq:4spec2-linearised}
inherits a natural Lyapunov functional: one can check that (with the
definition of $\|h\|_2$ given in \eqref{eq:Lp})
\begin{multline}
  \label{eq:entropy-linearised-2}
  \ddt \| h \|_2^2
  = \ddt \sum_{i=1}^4 \int_\Omega \frac{h_i^2}{a_{i,\infty}} \dx
  = 2 \ap{h, Th}
  \\
  =
  - 2 \sum_{i=1}^4d_i \int_\Omega \frac{|\nabla h_i|^2}{a_{i,\infty}} \d x
  - 2 C_{M}\int_\Omega \left(
    \frac{h_1}{a_{1,\infty}} +
    \frac{h_3}{a_{3,\infty}}
    - \frac{h_2}{a_{2,\infty}}
    - \frac{h_4}{a_{4,\infty}}
  \right)^2 \d x,
\end{multline}
where $C_M=\frac{M_{12}M_{32}M_{14}M_{34}}{M^2}$. We first observe that the
linear operator $T$ satisfies the following inequality, sometimes
known as a \emph{spectral gap inequality}:

\begin{lem}
  \label{lem:gap}
  Take numbers $a_{i,\infty} > 0$ for $i=1,\dots,4$. Assume Hypotheses
  \ref{hyp:Omega} and \ref{hyp:diffusion>0}, and define the linear
  operator $T$ by \eqref{eq:6}--\eqref{eq:9}. There exists
  $\lambda_* > 0$ such that
  \begin{equation*}
    \ap{h, Th} \leq -\lambda_* \|h\|_2^2
  \end{equation*}
  for all $h \in (L^2(\Omega))^4$ satisfying
  \eqref{eq:h-conservation}. The constant $\lambda_*$ depends only on
  the dimension $d$, the domain $\Omega$, the numbers $a_{i,\infty}$,
  $i = 1, \dots, 4$ and the diffusion constants $d_i$,
  $i = 1, \dots, 4$.
\end{lem}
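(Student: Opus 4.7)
The plan is to exploit the explicit formula for $\ap{h,Th}$ already displayed in \eqref{eq:entropy-linearised-2}. Writing $\ap{h,Th} = -\mathcal{D}(h) - \mathcal{R}(h)$ with
\[
  \mathcal{D}(h) := \sum_{i=1}^4 d_i \io \frac{|\nabla h_i|^2}{a_{i,\infty}}\dx,
  \qquad
  \mathcal{R}(h) := C_M \io u^2 \dx,
\]
where $u := \tfrac{h_1}{a_{1,\infty}} + \tfrac{h_3}{a_{3,\infty}} - \tfrac{h_2}{a_{2,\infty}} - \tfrac{h_4}{a_{4,\infty}}$, the task becomes to show $\mathcal{D}(h) + \mathcal{R}(h) \geq \lambda_* \norm{h}_2^2$ for every $h$ satisfying \eqref{eq:h-conservation}. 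Individually neither form is coercive: $\mathcal{D}$ vanishes on constants, and $\mathcal{R}$ vanishes on the hyperplane $\{u \equiv 0\}$. The point is that on the subspace cut out by the conservation laws these two kernels intersect only at $0$, and one expects a quantitative gap.

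To make this quantitative, split each component into mean and fluctuation: $h_i = \bar{h}_i + \tilde{h}_i$ with $\bar{h}_i := \io h_i \dx$ (using $|\Omega|=1$) and $\io \tilde{h}_i \dx = 0$. The Poincaré--Wirtinger inequality for Neumann data yields a constant $C_P = C_P(\Omega)>0$ with $\io |\tilde{h}_i|^2 \dx \leq C_P \io |\nabla h_i|^2 \dx$, whence
\[
  \sum_{i=1}^4 \frac{1}{a_{i,\infty}} \io |\tilde{h}_i|^2 \dx
  \leq
  \frac{C_P}{\min_j d_j}\, \mathcal{D}(h).
\]
Thus the diffusion term alone controls the fluctuating part of $\norm{h}_2^2$.

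For the means, the three identities in \eqref{eq:h-conservation} collapse to $\bar{h}_1 + \bar{h}_2 = \bar{h}_1 + \bar{h}_4 = \bar{h}_2 + \bar{h}_3 = 0$, which forces $\bar{h}_1 = \bar{h}_3 = -\bar{h}_2 = -\bar{h}_4 =: c$ for a single scalar $c \in \R$. Consequently $\bar{u} := \io u \dx = c \sum_{i} 1/a_{i,\infty}$, and Cauchy--Schwarz together with $|\Omega|=1$ give $\io u^2 \dx \geq \bar{u}^2$, so that
\[
  \mathcal{R}(h) \geq C_M \Big(\sum_{i=1}^4 \frac{1}{a_{i,\infty}}\Big)^2 c^2
  \geq \kappa \sum_{i=1}^4 \frac{\bar{h}_i^2}{a_{i,\infty}}
\]
for a positive constant $\kappa$ depending only on the $a_{i,\infty}$, where in the last step we used $\sum_i \bar{h}_i^2/a_{i,\infty} = c^2 \sum_i 1/a_{i,\infty}$.

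Adding the two previous displays and using $\norm{h}_2^2 = \sum_i (\bar{h}_i^2 + \io \tilde{h}_i^2 \dx)/a_{i,\infty}$ produces the spectral gap with an explicit $\lambda_*$ depending only on $d$, $\Omega$, the $a_{i,\infty}$ and the $d_i$, as claimed. The only delicate step is the observation that the cross-coupling between the two nonnegative forms $\mathcal{D}$ and $\mathcal{R}$ is mediated entirely by the three linear constraints \eqref{eq:h-conservation}; once the conservation laws reduce the four means to a single scalar $c$, the reaction form $\mathcal{R}$ necessarily sees it through the nonzero weight $\sum_i 1/a_{i,\infty}$, and no further algebraic miracle is needed. (A contradiction/compactness argument via Rellich would work equally well, but would not give the constructive dependence of $\lambda_*$.)
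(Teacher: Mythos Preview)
Your proof is correct and follows essentially the same approach as the paper's: both split $h_i$ into mean and fluctuation, control the fluctuation via the Poincar\'e inequality applied to the diffusion term, reduce the four means to a single scalar via the conservation laws \eqref{eq:h-conservation}, and then use $\int_\Omega u^2 \geq \bar{u}^2$ to make the reaction term control that scalar. The only cosmetic difference is that the paper keeps the weights $d_i$ through the Poincar\'e step and then introduces an auxiliary parameter $\gamma$ to balance the resulting mean correction against the reaction term, whereas you bound the fluctuation norm directly by $\mathcal{D}(h)/\min_j d_j$ and avoid the balancing altogether; the resulting constants are equivalent.
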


The above result can be deduced from the entropy inequalities in
\citet{Desvillettes2008Entropy}, but we have not been able to find
a precise reference that states it. We give a full proof in our particular
case for completeness:

\begin{proof}[Proof of Lemma \ref{lem:gap}]
  If we denote
  \begin{equation*}
    \bar{h}_i:=\int_\Omega h_i \dx
  \end{equation*}
  then the conservation laws \eqref{eq:h-conservation} allow us to
  write $\bar{h}_i$ in terms of $\bar{h}_1$ as
  \begin{equation*}
    \bar{h}_i=(-1)^{i+1}\bar{h}_1.
  \end{equation*}
  In order to show the result we use the expression of $\ap{h, Th}$
  given in \eqref{eq:entropy-linearised-2}. Due to the Poincar\'e
  inequality we have
  \begin{multline}
    \label{eq:Poincare}
    \sum_{i=1}^4 d_i\int_\Omega
    \frac{\vert \nabla h_i \vert^2}{a_{i,\infty}} \dx
    \geq
    C_\Omega \sum_{i=1}^4 d_i\int_\Omega
    \frac{\vert h_i  - \bar{h}_i \vert^2}{a_{i,\infty}} \dx
    \\
    =
    C_\Omega \sum_{i=1}^4 d_i \int_\Omega
    \frac{\vert h_i \vert^2}{a_{i,\infty}} \dx
    - 
    C_\Omega \sum_{i=1}^4 d_i \frac{\vert \bar{h}_i \vert^2}{a_{i,\infty}},
  \end{multline}
  where $C_\Omega$ is the Poincaré constant of the domain $\Omega$.
  For the second term in \eqref{eq:entropy-linearised-2} it holds that
  \begin{align}
    \int_\Omega \left(
    \frac{h_1}{a_{1,\infty}} +
    \frac{h_3}{a_{3,\infty}}
    - \frac{h_2}{a_{2,\infty}}
    - \frac{h_4}{a_{4,\infty}}
    \right)^2 \d x 
    &\ge
      \left(
      \frac{\bar{h}_1}{a_{1,\infty}} +
      \frac{\bar{h}_3}{a_{3,\infty}}
      - \frac{\bar{h}_2}{a_{2,\infty}}
      - \frac{\bar{h}_4}{a_{4,\infty}}
      \right)^2
      \nonumber
    \\
    &=
      \vert\bar{h}_1\vert^2
      \left(
      \sum_{i=1}^4  \frac{1}{a_{i,\infty}}
      \right)^2.
  \end{align}
  Therefore, taking $\gamma$ small enough so that
  $$
  \gamma
  <
  \min\left\{
    C_\Omega,\
    \frac{C_M\left(\sum_{i=1}^4 \frac{1}{a_{i,\infty}}\right)^2}
    {\sum_{i=1}^4 \frac{d_i}{a_{i,\infty}}}
  \right\}
  $$
  we have
  \begin{align}
    -\ap{h, Th}
    &\ge
      \gamma \left( \sum_{i=1}^4 d_i
      \int_\Omega \frac{\vert h_i \vert^2}{a_{i,\infty}} \dx
      -
      \vert \bar{h}_1 \vert^2\sum_{i=1}^4  \frac{d_i}{a_{i,\infty}}\right)
      +
      C_M\vert\bar{h}_1\vert^2 \left(\sum_{i=1}^4  
      \frac{1}{a_{i,\infty}} \right)^2
                    \nonumber
    \\
                  &\ge \gamma \min_{i\in \{1, \ldots ,4\}}\{d_i\} \Vert h \Vert_2^2
                    +\vert\bar{h}_1\vert^2 \left(
                    C_M\left(\sum_{i=1}^4  
                    \frac{1}{a_{i,\infty}}\right)^2 -
                    \gamma \sum_{i=1}^4  \frac{d_i}{a_{i,\infty}}
                    \right)
                    \nonumber
    \\
                  &\ge \gamma \min_{i\in \{1, \ldots ,4\}}\{d_i\} \Vert h \Vert_2^2,
  \end{align}
  This shows the result.
\end{proof}

\begin{rem}
  Notice that this proof relies on the fact that all constants $d_i$
  are strictly positive (but there is not restriction on how far apart
  they are from each other). It should be possible to adapt it in the
  case in which one of them vanishes by similar arguments to those in
  \citet{DF2007Equadiff}. This and the following Lemma are the main
  points in the paper where the positivity of the $d_i$ is used.
\end{rem}

Using Lemma \ref{lem:gap} in \eqref{eq:entropy-linearised-2} one
readily sees that
\begin{equation*}
  \| h \|_2^2 \leq e^{-2 \lambda_* t} \|h_0\|_2^2,
\end{equation*} for any solution $h$ to \eqref{eq:4spec1-linearised}--\eqref{eq:4spec2-linearised} satisfying \eqref{eq:h-conservation},
where $h_0 := (h_{i,0})_{i = 1, \dots, 4} := (a_{i,0} -
a_{i,\infty})_{i = 1, \dots, 4}$.

In fact, one can find a slightly different inequality that shows
$\ap{h,D h+ Lh}$ actually bounds a stronger norm than $\|h\|_2$:

\begin{lem}
  \label{lem:gap-strong}
  Take numbers $a_{i,\infty} > 0$ for $i=1,\dots,4$. Assume Hypotheses
  \ref{hyp:Omega} and \ref{hyp:diffusion>0}, and define the linear
  operator $T$ by \eqref{eq:6}--\eqref{eq:9}. There exists
  $\lambda > 0$ such that
  \begin{equation*}
    \ap{h, Th} \leq -\lambda \|h\|_2^2 -  \lambda \| \nabla h \|_2^2
  \end{equation*}
  for all $h \in (L^2(\Omega))^4$ satisfying
  \eqref{eq:h-conservation}. The constant $\lambda$ depends only on
  the dimension $d$, the domain $\Omega$, the numbers $a_{i,\infty}$,
  $i = 1, \dots, 4$ and the diffusion constants $d_i$,
  $i = 1, \dots, 4$.
\end{lem}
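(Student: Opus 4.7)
The plan is to exploit the identity \eqref{eq:entropy-linearised-2} directly, which already decomposes $-2\ap{h,Th}$ as the sum of a pure diffusion term and a nonnegative reaction term. From this identity, dropping the nonnegative reaction contribution and using that each $d_i \geq d_{\min} := \min_{i=1,\dots,4} d_i > 0$ (Hypothesis \ref{hyp:diffusion>0}), one immediately obtains
\begin{equation*}
  \ap{h, Th} \leq - d_{\min} \sum_{i=1}^4 \int_\Omega \frac{|\nabla h_i|^2}{a_{i,\infty}} \dx
  = - d_{\min} \|\nabla h\|_2^2,
\end{equation*}
so half of the desired inequality is already at hand with no extra work.

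For the other half I would simply invoke Lemma \ref{lem:gap}, which provides $\ap{h, Th} \leq -\lambda_* \|h\|_2^2$ for every $h$ satisfying the conservation relations \eqref{eq:h-conservation}. The two bounds can then be combined by splitting $\ap{h,Th}=\tfrac12\ap{h,Th}+\tfrac12\ap{h,Th}$ and applying each estimate to one half:
\begin{equation*}
  \ap{h, Th}
  \leq -\frac{\lambda_*}{2}\|h\|_2^2 - \frac{d_{\min}}{2}\|\nabla h\|_2^2
  \leq -\lambda \bigl(\|h\|_2^2 + \|\nabla h\|_2^2\bigr),
\end{equation*}
with $\lambda := \tfrac12 \min\{\lambda_*, d_{\min}\}$. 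This $\lambda$ depends only on the quantities listed in the statement, since $\lambda_*$ does (by Lemma \ref{lem:gap}) and $d_{\min}$ is determined by the $d_i$.

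There is really no main obstacle here: once one reads off \eqref{eq:entropy-linearised-2} that the dissipation controls $\|\nabla h\|_2^2$ unconditionally (no use of the conservation laws is needed for the gradient part), the lemma follows by linear combination with the weaker Poincaré-type bound of Lemma \ref{lem:gap}. The only small care needed is to ensure that the two bounds use compatible weights — but they both use the weighted norms defined in \eqref{eq:Lp}, so no renormalisation is required.
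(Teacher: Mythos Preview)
Your proof is correct and follows essentially the same idea as the paper's: both split the quadratic form in half, using one half to control $\|\nabla h\|_2^2$ via the diffusion term in \eqref{eq:entropy-linearised-2} and the other half to control $\|h\|_2^2$ via the spectral gap. The paper phrases this as applying Lemma \ref{lem:gap} to the modified operator $\tilde{L}_i h := \tfrac12 d_i \Delta h_i + L_i h$ (i.e., halved diffusion) to get a constant $\tilde{\lambda}$, whereas you average the two already-available inequalities directly; your formulation is arguably a bit cleaner since it reuses $\lambda_*$ rather than producing a new constant.
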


\begin{proof}
  We set
  \begin{equation*}
    \tilde{L}_i h := d_i\Delta h_i+ L_i h- \frac12 d_i \Delta h_i
    = \frac12 d_i\Delta h_i+ L_i h
  \end{equation*}
  so that
  \begin{equation*}
    2 \langle{ h, \tilde{L} h }\rangle
    \\
    = - \sum_{i=1}^4 \int_\Omega \frac{|\nabla h_i|^2}{a_{i,\infty}}
    \d x - 2 C_M\int_\Omega \left( \frac{h_1}{a_{1,\infty}} +
      \frac{h_3}{a_{3,\infty}} - \frac{h_2}{a_{2,\infty}} -
      \frac{h_4}{a_{4,\infty}} \right)^2 \d x
  \end{equation*}
  One sees that the inequality
  \begin{equation}
    \label{eq:gap-abstract2}
    2 \langle{ h,  \tilde{L} h }\rangle \leq -\tilde{\lambda} \| h \|_2^2,
  \end{equation}
  still holds for some $\tilde{\lambda} > 0$, so that
  \begin{equation*}
    2  \ap{ h, T h} \leq
    -\tilde{\lambda} \| h \|_2^2
    -  \sum_{i=1}^4 d_i\int_\Omega \frac{|\nabla
      h_i|^2}{a_{i,\infty}}\d x
    \leq
    -\tilde{\lambda} \| h \|_2^2
    -  \min_{i=1,\dots,4}\{d_i\} \| \nabla h \|_2^2.
    \qedhere
  \end{equation*}
\end{proof}

We will use this strengthened inequality in the following section.

\section{Local $L^2$ stability}
\label{sec:L2}

We now plan to use Lemma \ref{lem:gap-strong} in order to deduce
$L^\infty([0,\infty), L^2(\Omega))$ bounds on solutions to the
nonlinear system \eqref{eq:4spec1}--\eqref{eq:4spec2-2}, together with
an exponential convergence to equilibrium of solutions:

\begin{thm}
  \label{thm:L2-convergence}
  Assume Hypotheses \ref{hyp:Omega}, \ref{hyp:diffusion>0} and
  \ref{hyp:data}, and assume $d \leq 4$. Let $(a_i)_{i=1,\dots,4}$ be
  a solution to the system
  \eqref{eq:4spec1-2}--\eqref{eq:4spec2-2}. Let $a_{i,\infty}$,
  $i = 1, \dots, 4$, denote the only positive equilibrium of
  \eqref{eq:4spec1-2} with the same invariants as
  $(a_{i,0})_{i = 1, \dots, 4}$ (that is, satisfying
  \eqref{eq:equilibrium}).

  There exist positive constants $\lambda, \epsilon > 0$ depending on
  $d$, $(d_i)_{i=1,\dots,4}$, $(a_{i,\infty})_{i = 1,
    \dots, 4}$ and $\Omega$, such that
  \begin{equation*}
    \sum_{i=1}^4 \int_\Omega \frac{|a_i(t,x) -
      a_{i,\infty}|^2}{a_{i,\infty}} \dx
    \leq
    e^{-\lambda t} \sum_{i=1}^4 \int_\Omega \frac{|a_{i,0}(x) -
      a_{i,\infty}|^2}{a_{i,\infty}} \dx
  \end{equation*}
  whenever the $a_{i,0}$ satisfy
  \begin{equation*}
    \sum_{i=1}^4 \int_\Omega \frac{|a_{i,0}(x) -
      a_{i,\infty}|^2}{a_{i,\infty}} \dx
    \leq
    \epsilon.
  \end{equation*}
\end{thm}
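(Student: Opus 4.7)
The strategy is to write the problem as a perturbation of the linearised system and exploit the strong spectral gap of Lemma~\ref{lem:gap-strong}. Setting $h_i := a_i - a_{i,\infty}$ and using the detailed balance identity $a_{1,\infty}a_{3,\infty} = a_{2,\infty}a_{4,\infty}$, a direct expansion of the nonlinear terms in \eqref{eq:4spec1-2} gives
\[
\partial_t h = T h + Q(h),
\]
where $T$ is the linear operator from \eqref{eq:9} and $Q(h) = \bigl((-1)^i (h_1 h_3 - h_2 h_4)\bigr)_{i=1,\dots,4}$ is purely quadratic in $h$. Because $a_\infty$ was chosen to share all invariants with $a_0$, the perturbation $h$ satisfies the conservation conditions \eqref{eq:h-conservation}, so Lemma~\ref{lem:gap-strong} applies to $h(t)$ at every time.

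I would then carry out the natural weighted $L^2$ energy identity
\[
\tfrac{1}{2} \ddt \norm{h}_2^2 = \ap{h, Th} + \ap{h, Q(h)}.
\]
Lemma~\ref{lem:gap-strong} bounds the linear part by $-\lambda \norm{h}_2^2 - \lambda \norm{\nabla h}_2^2$. The second term is cubic in $h$; applying $|ab| \leq \tfrac12(a^2+b^2)$ pointwise to the products in $(h_1 h_3 - h_2 h_4)(h_j/a_{j,\infty})$ gives $\abs{\ap{h, Q(h)}} \leq C \sum_i \io \abs{h_i}^3 \dx \leq C \norm{h}_3^3$. To control $\norm{h}_3$ I apply the Gagliardo--Nirenberg inequality (Lemma~\ref{lem:GNS}) with $q = s = 2$ and $p = 3$, which gives interpolation exponent $\theta = d/6$. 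Combined with $\norm{h}_1 \leq C \norm{h}_2$ (valid since $\abs{\Omega} = 1$), cubing yields
\[
\norm{h}_3^3 \leq C \norm{\nabla h}_2^{d/2} \norm{h}_2^{3 - d/2} + C \norm{h}_2^3.
\]

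The dimensional restriction $d \leq 4$ enters precisely here: it forces $d/2 \leq 2$, so Young's inequality (or, for $d=4$, the direct factorisation as $\norm{h}_2 \cdot \norm{\nabla h}_2^2$) lets me dominate the first term by $\delta \norm{\nabla h}_2^2 + C_\delta \norm{h}_2^r$ for some $r \geq 3$. Hence, provided $\norm{h(t)}_2 \leq \eta$ with $\eta$ sufficiently small, the energy identity reduces to
\[
\ddt \norm{h}_2^2 \leq -\lambda \norm{h}_2^2.
\]
I would close the argument by a standard continuity/bootstrap: choose $\epsilon > 0$ small enough that any trajectory starting with $\norm{h_0}_2^2 \leq \epsilon$ cannot cross the threshold $\norm{h}_2^2 = 2\epsilon$, since the above differential inequality makes $\norm{h}_2^2$ strictly decrease throughout the region where $\norm{h}_2 \leq \sqrt{2\epsilon}$. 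Grönwall's lemma then delivers the exponential decay claimed.

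The main obstacle I anticipate is exactly the dimension-dependent interpolation: $d \leq 4$ is the sharp range in which the cubic perturbation $\norm{h}_3^3$ can be absorbed into the quadratic $H^1$-dissipation produced by Lemma~\ref{lem:gap-strong}, because for $d \geq 5$ the Gagliardo--Nirenberg exponent on $\norm{\nabla h}_2$ exceeds $2$ and no bootstrap based on smallness of $\norm{h}_2$ alone can close it. A secondary, purely technical, point is justifying the $L^2$-energy identity at the level of the weak mild solutions considered in the paper, which requires a short approximation argument.
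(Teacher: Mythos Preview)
Your proposal is correct and follows essentially the same route as the paper: write $h=a-a_\infty$, apply Lemma~\ref{lem:gap-strong} to the linear part, bound the cubic remainder by $C\|h\|_3^3$, and control $\|h\|_3$ via Gagliardo--Nirenberg with $\theta=d/6$; the restriction $d\le 4$ enters for exactly the reason you identify. The only cosmetic difference is the final algebra: the paper inverts the Gagliardo--Nirenberg inequality to bound $-\|\nabla h\|_2^2$ by a negative power of $\|h\|_3$ and then factors, whereas you expand $\|h\|_3^3$ and absorb via Young's inequality --- both manipulations are equivalent, and the paper likewise notes the need for an approximation step to justify the energy identity.
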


\begin{rem}
  All constants in the above statement can be explicitly given in
  terms of the parameters of the problem, as can be seen from the
  proof.
\end{rem}

\begin{rem}
  In particular we have $L^1$ convergence:
  $\sum_{i=1}^4 \Vert a_i-a_{i,\infty} \Vert_{L^1(\Omega)}^2 \le K
  e^{-\lambda t}$
  for some constant $K$ depending on the parameters of the problem and
  the initial data.
\end{rem}

\begin{proof}[Proof of Theorem \ref{thm:L2-convergence}]
  In this proof we denote by $C, C_1, C_2, \dots$ any number that
  depends only on the parameters in the statement ($d$,
  $(d_i)_{i=1,\dots,4}$, $(a_{i,\infty})_{i = 1, \dots, 4}$ and
  $\Omega$). We deduce an \emph{a priori} estimate on the $L^2$ norm
  of the solution, which can be rigorously justified by a standard
  approximation argument (for example by truncating the nonlinearity
  of the equation).

  Consider a solution $(a_1, a_2, a_3, a_4)$ to the system
  \eqref{eq:4spec1}--\eqref{eq:4spec2-2}.  Defining $h_i$ as before,
  \begin{equation*}
    a_i = a_{i,\infty} + h_i,
    \qquad i = 1,\dots, 4,
  \end{equation*}
  we have that the difference to equilibrium
  $h = (h_1, h_2, h_3, h_4)$ satisfies (using the notation of Section
  \ref{sec:linearised})
  \begin{equation}
    \label{eq:around-eq}
    \left.
      \begin{aligned}
        \partial_t h_1 &= d_1 \Delta h_1 + L_1 h - h_1h_3 + h_2h_4,
        \\
        \partial_t h_2 &= d_2 \Delta h_2 + L_2 h + h_1h_3 - h_2h_4,
        \\
        \partial_t h_3 &= d_3 \Delta h_3 + L_3 h - h_1h_3 + h_2h_4,
        \\
        \partial_t h_4 &= d_4 \Delta h_4 + L_4 h + h_1h_3 - h_2h_4,
      \end{aligned}
      \quad \right\}
    \qquad t \geq 0,\ x \in \Omega,
  \end{equation}
  together with the mass-conserving boundary conditions
  \eqref{eq:4spec2-linearised}. Since the solution
  $(a_i)_{i=1,\dots,4}$ satisfies the conservation laws
  \eqref{eq:equilibrium}, the $h_i$ satisfy \eqref{eq:h-conservation}.
  If we denote the nonlinear terms in the right hand side of
  \eqref{eq:around-eq} by
  $$N(h) = (N_1 h, N_2 h, N_3 h, N_4 h)$$
  we may write \eqref{eq:around-eq} as
  \begin{equation*}
    \partial_t h = D h+ Lh + N(h) = Th + N(h).
  \end{equation*}
  We can now estimate the evolution of the $L^2$ norm defined in
  \eqref{eq:Lp} by using Lemma \ref{lem:gap-strong}:
  \begin{equation}
    \label{eq:1}
    \ddt \| h \|_2^2
    =
    2 \ap{h, Th}
    + 2 \ap{h, N(h)}
    \leq
    - 2 \lambda \|h\|_2^2
    - 2 \lambda \|\nabla h\|_2^2
    + 2 \ap{h, N(h)}.
  \end{equation}
  We use the Gagliardo-Nirenberg inequality in Lemma \ref{lem:GNS} to
  obtain
  \begin{equation*}
    \| h \|_3 \le C_1 \| \nabla h \|_2^\theta 
    \|  h \|_2^{1-\theta}
    + C_1 \|h\|_2, \qquad
    \theta=\frac{d}{6}
  \end{equation*}
  (which is actually valid for $d < 6$). This implies
  \begin{equation*}
    \| h \|_3^{\frac{2}{\theta}}
    \leq
    C_2 \| \nabla h \|_2^2
    \|  h \|_2^{\frac{2(1-\theta)}{\theta}}
    + C_2 \|h\|_2^{\frac{2}{\theta}}.
  \end{equation*}
  that is,
  \begin{equation}
    \label{eq:GNS1}
    - 2 \lambda \| \nabla h \|_2^2
    \\
    \leq
    - \lambda \| \nabla h \|_2^2
    \leq
    -\frac{\lambda}{C_2} \| h \|_3^{\frac{2}{\theta}} \|  h \|_2^{-\frac{2(1-\theta)}{\theta}}
    + \lambda \|h\|_2^2.
  \end{equation}
  In order to show that the term $\ap{h, N(h)}$ is negligible close to
  equilibrium we need to estimate quantities of the type
  $\int_\Omega h_i h_j h_k \d x$, with $i,j,k \in \{1,2,3,4\}$. We do
  that by Hölder's inequality:
  \begin{equation}
    \label{eq:hi-hj-bound-2}
    \left| \int_\Omega h_i h_j h_k \dx \right|
    \leq
    C_3
    \| h \|_3^3,
  \end{equation}
  for a certain constant $C_3$ depending only on the equilibrium
  values $(a_{i,\infty})_{i=1,\dots,4}$. Consequently,
  \begin{equation}
    \label{eq:nonlinear-bound-2}
    \ap{h, N(h)} \leq C_4 \|h\|_3^3.
  \end{equation}
  Using \eqref{eq:GNS1} and \eqref{eq:nonlinear-bound-2} in
  \eqref{eq:1},
  \begin{align*}
    \ddt \| h \|_2^2
    &\leq
    - \lambda \|h\|_2^2
    - \frac{\lambda}{C_2} \| h \|_3^{\frac{12}{d}} \| h \|_2^{\frac{2d-12}{d}}
      + 2 C_4 \|h\|_3^3
    \\
    & =
      - \lambda \|h\|_2^2
      -  \| h \|_3^{\frac{12}{d}}
      \left(
      \frac{\lambda}{C_2} \| h \|_2^{\frac{2d-12}{d}}
      -2 C_4 \| h \|_3^{\frac{3d-12}{d}}
      \right)
    \\
    &\leq
      - \lambda \|h\|_2^2
      -  \| h \|_3^{\frac{12}{d}}
      \left( \frac{\lambda}{C_2}  \| h \|_2^{\frac{2d-12}{d}}
      - C_5 \| h \|_2^{\frac{3d-12}{d}}\right)
    \\
    &=
      - \lambda \|h\|_2^2
      -  \| h \|_3^{\frac{12}{d}}\| h \|_2^{\frac{2d-12}{d}}
      \left(\frac{\lambda}{C_2}  - C_5 \| h \|_2\right)
      ,
  \end{align*}
  where in the second inequality we used that $3d-12\le 0$ and $\|h\|_2
  \leq C \|h\|_3$ for some constant $C$. If it is initially true that
  $$\| h_0 \|_2 \le \frac{\lambda}{C_2 C_5}$$
  then $\|h\|_2$ is decreasing in time and we obtain the result.
\end{proof}

\begin{rem}
  One may wonder if choosing a norm $\|h\|_p$ different from $\|h\|_3$
  may give an improvement in the above argument, perhaps making it
  work in dimensions $d \geq 4$. The authors have tried this with
  other choices of $p$, and in particular for $p = 2_* = (2d)/(d-2)$
  the Poincaré dual of $2$. The dimension $d=4$ seems to be a
  limitation of the argument and not of the specific exponents used in
  the proof.
\end{rem}

\section{Regularity of close-to-equilibrium solutions}
\label{sec:regularity}

We first extend the $L^2$ estimates of the previous section to $L^p$
estimates for a certain range of $p > 2$.

\begin{prp}
  \label{prp:Lp-bounds}
  Assume Hypotheses \ref{hyp:Omega}, \ref{hyp:diffusion>0} and
  \ref{hyp:data}, and let $d \leq 4$. Let $(a_i)_{i=1,\dots,4}$ be a
  solution to the system \eqref{eq:4spec1-2}--\eqref{eq:4spec2-2}. Let
  $a_{i,\infty}$, $i = 1, \dots, 4$, denote the only positive
  equilibrium of \eqref{eq:4spec1} with the same invariants as
  $(a_{i,0})_{i = 1, \dots, 4}$ (that is, satisfying
  \eqref{eq:equilibrium}).

  Assume also that $\|a_{i,0}\|_p < \infty$ for some
  $2 \leq p < \infty$. Then there exist positive constants
  $\lambda, K, \epsilon > 0$ depending on $p$, $d$, $(d_i)_{i=1,\dots,4}$,
  $(a_{i,\infty})_{i = 1, \dots, 4}$, $\|a_0\|_p$ and $\Omega$, such
  that
  \begin{equation*}
    \sum_{i=1}^4 \int_\Omega \frac{|a_i(t,x) -
      a_{i,\infty}|^p}{a_{i,\infty}} \dx
    \leq
    K (1+t)
  \end{equation*}
  whenever the $a_{i,0}$ satisfy
  \begin{equation*}
    \sum_{i=1}^4 \int_\Omega \frac{|a_{i,0}(x) -
      a_{i,\infty}|^2}{a_{i,\infty}} \dx
    \leq
    \epsilon.
  \end{equation*}
\end{prp}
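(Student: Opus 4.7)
The idea is an $L^p$ energy estimate in which the $L^2$-smallness provided by Theorem~\ref{thm:L2-convergence} plays the role of a small parameter absorbing the ``bad'' terms into the diffusive dissipation. Writing $h_i = a_i - a_{i,\infty}$ and $u_i := |h_i|^{p/2}$, multiplying the $i$-th equation by $p|h_i|^{p-2}h_i/a_{i,\infty}^{p-1}$ and integrating over $\Omega$ using the Neumann boundary condition, I obtain the identity
\begin{equation*}
  \ddt \|h\|_p^p
  = -\sum_{i=1}^{4} \frac{4(p-1)d_i}{p\,a_{i,\infty}^{p-1}} \|\nabla u_i\|_2^2
  + p \sum_{i=1}^{4} \int_\Omega \frac{|h_i|^{p-2} h_i}{a_{i,\infty}^{p-1}} \bigl(L_i h + N_i(h)\bigr) \dx.
\end{equation*}
The goal is to absorb the linear and nonlinear contributions into the first (dissipative) term, up to quantities that carry a small factor of $\|h\|_2$.

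The core estimate is a Cauchy--Schwarz trick that pulls an $\|h\|_2$ factor out of each dangerous integral. For the cubic nonlinear piece, writing $|h_i|^{p-1}|h_j h_k| = |h_k|\cdot(|h_i|^{p-1}|h_j|)$ and combining Cauchy--Schwarz with H\"older gives
\begin{equation*}
  \int_\Omega |h_i|^{p-1} |h_j h_k| \dx \leq \|h_k\|_2 \|h\|_{2p}^p \leq C \|h\|_2 \sum_i \|u_i\|_4^2,
\end{equation*}
and for the quadratic linear piece, a direct Cauchy--Schwarz yields
\begin{equation*}
  \int_\Omega |h_i|^{p-1} |h_j| \dx
  \leq \|h_j\|_2 \|h_i\|_{2(p-1)}^{p-1}
  = \|h_j\|_2 \|u_i\|_{4(p-1)/p}^{2(p-1)/p}.
\end{equation*}
This is where $d\leq 4$ enters: the critical Sobolev embedding $H^1(\Omega) \hookrightarrow L^4(\Omega)$ (valid exactly when $d\leq 4$) gives $\|u_i\|_4^2 \leq C(\|u_i\|_2^2 + \|\nabla u_i\|_2^2) = C(\|h\|_p^p + \|\nabla u_i\|_2^2)$, while Gagliardo--Nirenberg (Lemma~\ref{lem:GNS}) controls $\|u_i\|_{4(p-1)/p}^{2(p-1)/p}$ by $\|\nabla u_i\|_2^\alpha \|u_i\|_2^\beta + C\|u_i\|_1^\gamma$ with $\alpha = d(p-2)/(2p) < 2$ for all $p\geq 2$ and $d\leq 4$. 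Young's inequality then converts both estimates into bounds of the form $C\|h\|_2\bigl(\|\nabla u\|_2^2 + P(\|h\|_p^p)\bigr)$ for some polynomial $P$ depending on $p$ and $d$.

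Substituting back and choosing $\epsilon$ small enough that $C\|h(t)\|_2 \leq c_0/2$ for all $t\geq 0$ (which holds uniformly thanks to the exponential decay $\|h(t)\|_2 \leq e^{-\lambda t/2}\sqrt{\epsilon}$ from Theorem~\ref{thm:L2-convergence}), the $\|\nabla u\|_2^2$ piece on the right is absorbed by the dissipation and leaves
\begin{equation*}
  \ddt \|h(t)\|_p^p \leq C\,\|h(t)\|_2\, P(\|h(t)\|_p^p).
\end{equation*}
Since $\|h(t)\|_2$ is integrable on $[0,\infty)$, a standard Gr\"onwall/comparison argument (preceded by a truncation of the nonlinearity to justify the formal \emph{a priori} estimate) yields that $\|h(t)\|_p^p$ stays bounded uniformly in $t$, which is even stronger than the stated $K(1+t)$.

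The main obstacle is that for $p\neq 2$ the linearised operator does \emph{not} provide any intrinsic $L^p$ dissipation: Lemma~\ref{lem:gap-strong} is strictly an $L^2$ statement. One is therefore forced to borrow coercivity from the diffusion by multiplying everything by the small factor $\|h\|_2$, which is why the proof cannot dispense with the a priori $L^2$ bound. The dimensional restriction $d\leq 4$ is exactly what makes this borrowing possible through the (sub)critical Sobolev embedding $H^1 \hookrightarrow L^4$; for $d \geq 5$ the exponent $\alpha$ above exceeds $2$ for large $p$ and the absorption step breaks down.
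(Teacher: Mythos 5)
Your argument is correct and, while it shares the paper's skeleton (the $L^p$ energy identity, Gagliardo--Nirenberg/Sobolev control of the resulting terms, which is where $d\le 4$ enters, and absorption into the diffusive dissipation using the $L^2$-smallness supplied by Theorem \ref{thm:L2-convergence}), it closes the estimate by a genuinely different mechanism. The paper leaves the linear term as $C\|h\|_p^p$ with no small prefactor and defeats it with a \emph{superlinear} negative term $-C\|h\|_p^{p/\mu}$, $\mu<1$, obtained by bounding the dissipation below by $\|h\|_q^p$ (with $q=dp/(d-2)$) and interpolating against the conserved $L^1$ norm; only the nonlinear term is given a $\|h\|_2$ prefactor and cancelled against half the dissipation. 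This forces separate treatments of $d=3,4$, of $d=2$ (where only $2\le p<4$ is obtained directly and Lemma \ref{lem:heat-regularisation} bootstraps the rest) and of $d=1$. You instead extract a factor of $\|h(t)\|_2$ from \emph{both} the linear and the nonlinear contributions by Cauchy--Schwarz, absorb the gradient pieces through $H^1\hookrightarrow L^4$ (which at $d=4$ is exactly the paper's $\|h\|_q^p$ with $q=2p$), and conclude by a Gr\"onwall/comparison argument driven by $\int_0^\infty\|h(t)\|_2\,\mathrm{d}t\le 2\sqrt{\epsilon}/\lambda$. This buys uniformity in $d\le 4$ and in $p\ge 2$ in a single computation, and yields the time-uniform bound directly (which the paper also obtains for $d\ge 3$, hence the otherwise unnecessary $(1+t)$ in the statement). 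The one detail worth writing out is the exponent bookkeeping after Young's inequality on the linear term: the leftover is $C\|h_j\|_2^{2/(2-\alpha)}\bigl(\|h\|_p^p\bigr)^{\beta/(2-\alpha)}$ with $\beta/(2-\alpha)=\frac{4p-4-d(p-2)}{4p-d(p-2)}\in(0,1)$ for $d\le 4$ and $p\ge 2$, so every term on the right of your final differential inequality is at most linear in $\|h\|_p^p$ and the comparison argument closes without requiring $\epsilon$ to be small relative to $\|h_0\|_p$.
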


\begin{proof}
  For $d=1$ the result is given directly by Lemma
  \ref{lem:heat-regularisation} (the time dependence can be checked to
  be less than $1 + t$). For $d \geq 2$ and any $p > 1$ we compute
  \begin{align}
    \label{eq:p-norm}
    \ddt \| h \|_p^p
    &= \ddt \sum_{i=1}^4 \int_\Omega \frac{\vert h_i\vert^p}{a_{i,\infty}} \d t
      \nonumber
    \\
    &=
      - \frac{p-1}{4p} \sum_{i=1}^4d_i 
      \int_\Omega \frac{\vert\nabla h_i^{p/2}\vert^2}{a_{i,\infty}} \d x
      \nonumber
    \\
    &+ p \sum_{i=1}^4\frac{1}{a_{i,\infty}}
      \int_\Omega \sign(h_i) \vert h_i \vert^{p-1}\left( L_i(h)+N_i(h)\right) \d x.
  \end{align}
  Using that for any $i,j=1,\dots,4$
  $$
  \int_\Omega |h_i|^{p-1}|h_j| \ dx
  \le C_1 \| h \|_p^p,
  $$
  the linear term corresponding to $L$ is controlled by the $L^p$ norm:
  \begin{equation}
    \label{eq:p-normL}
    p \sum_{i=1}^4\frac{1}{a_{i,\infty}}
    \int_\Omega \sign(h_i) \vert h_i \vert^{p-1} L_i(h) \ dx 
    \le
    C_L \| h \|_p^p. 
  \end{equation}
  The nonlinear term can be bounded by an application of Hölder's
  inquality similar to the one in the previous section, as
  $$
  \int_\Omega |h_i||h_j||h_k|^{p-1} \ dx \le C_2 \| h \|_{p+1}^{p+1},
  $$
  for any $i,j,k \in \{1,2,3,4\}$, obtaining
  \begin{equation}
    \label{eq:p-normN}
    p \sum_{i=1}^4\frac{1}{a_{i,\infty}}
    \int_\Omega \sign(h_i) \vert h_i \vert^{p-1} N_i(h) \ dx 
    \le
    C_N \| h \|_{p+1}^{p+1}.
  \end{equation}
  Using \eqref{eq:p-normL} and
  \eqref{eq:p-normN} in \eqref{eq:p-norm} we have
  \begin{equation}
    \label{eq:ddt-Lp-2}
    \ddt \| h \|_p^p
    \leq
    - \frac{p-1}{4p} \sum_{i=1}^4d_i 
    \int_\Omega \frac{\vert\nabla h_i^{p/2}\vert^2}{a_{i,\infty}} \d x
    + C_L \|h\|_p^p + C_N \|h\|_{p+1}^{p+1}.
  \end{equation}
  
  \medskip\noindent
  \textbf{Proof for $d = 3,4$.}
  In dimensions $d > 2$ the diffusion term can be bounded as follows
  using the Gagliardo-Nirenberg inequality \eqref{eq:GNS}:
  \begin{equation}
    \label{eq:p-normDiff}
    - \frac{p-1}{4p} \sum_{i=1}^4d_i 
    \int_\Omega \frac{\vert\nabla h_i^{p/2}\vert^2}{a_{i,\infty}} \d x
    \le
    -C_D \| h \|_{q}^p + C_D \|h\|_p^p,
  \end{equation}
  where $q:=\frac{dp}{d-2}$ (notice that the minimum of the diffusion
  coefficients $d_i$ appears implicitly in the constant $C_D$). Using
  \eqref{eq:p-normDiff} in \eqref{eq:ddt-Lp-2} we have
  \begin{equation}
    \label{eq:ddt-Lp-3}
    \ddt \| h \|_p^p
    \leq
    - C_D \|h\|_q^p 
    + C_3 \|h\|_p^p + C_N \|h\|_{p+1}^{p+1}.
  \end{equation}
  We now use the interpolation
  $$
  \| h \|_p\le \| h \|_1^{1-\mu} \| h \|_{q}^\mu,
  $$
  with
  \begin{equation*}
    \mu = \frac{q(1-p)}{p(1-q)} = \frac{d(1-p)}{d(1-p)-2},
  \end{equation*}
  together with the fact that the
  $L^1$ norm of $h$ is uniformly bounded in time (since the total mass
  is conserved), to obtain
  $$
  \| h \|_p \le C_2 \| h \|_{q}^\mu,
  $$
  This gives
  \begin{equation}
    \label{eq:p-normDiff-2}
    -C_D \| h \|_{q}^p
    \le -C_4 \| h \|_{p}^{p/\mu}.
  \end{equation}
  To control the nonlinear part we may use interpolation, noticing
  that $2\le p+1\le q$ (which holds since $\frac{d-2}{2} \le p$)
  \begin{equation}
    \label{eq:nlin-bound}
    \| h \|_{p+1}^{p+1}\le \| h \|_2^{\theta(p+1)} 
    \| h \|_{q}^{(p+1)(1-\theta)},
  \end{equation}
  where $\theta=\frac{2(q-p-1)}{(q-2)(p+1)}$. In dimension $d \leq 4$
  the exponent of $\|h\|_q$ is smaller than $p$, since in this case
  $2p-q\le 0$ and thus
  $$
  (p+1)(1-\theta)-p=1-\theta(p+1)=1-\frac{2(q-p-1)}{q-2}
  =\frac{2p-q}{q-2}\le 0.
  $$
  Since $p \leq q$ and the domain $\Omega$ is bounded, we have
  $$\|h\|_q^{(p+1)(1-\theta)} \leq C_5 \|h\|_q^p \|h\|_2^{(p+1)(1-\theta) -
    p}$$ and hence from \eqref{eq:nlin-bound} one obtains
  \begin{equation}
    \label{eq:nlin-bound-2}
    \| h \|_{p+1}^{p+1}
    \le
    C_5 \| h \|_2 \| h \|_{q}^{p}.
  \end{equation}
  Finally, using \eqref{eq:p-normDiff-2} and \eqref{eq:nlin-bound-2}
  in \eqref{eq:ddt-Lp-3}, breaking the term $-C_D \|h\|_q^p$ into two
  halves, we have
  \begin{multline}
    \label{eq:ddt-Lp-4}
    \ddt \| h \|_p^p
    \leq
    - \frac12 C_D \|h\|_q^p 
    - \frac12 C_D \|h\|_q^p 
    + C_3 \|h\|_p^p + C_N \|h\|_{p+1}^{p+1}.
    \\
    =
    - \frac12 C_D \|h\|_q^p
    - \frac{1}{2} C_4 \|h\|_p^{p/\mu}
    + C_3 \|h\|_p^p + C_N C_5 \|h\|_2 \|h\|_q^p
    \\
    =
    - \frac{1}{2} C_4 \|h\|_p^{p/\mu}
    + C_3 \|h\|_p^p
    - \|h\|_q^p \left(
      \frac12 C_D - C_N C_5 \|h\|_2
    \right).
  \end{multline}
  If the initial condition satisfies
  \begin{equation*}
    \|h_0\|_2 \leq \frac{C_D}{2 C_N C_5}
  \end{equation*}
  then this is also the case for any $t \geq 0$, and the last term in
  \eqref{eq:ddt-Lp-4} is nonpositive. This shows that
  \begin{equation*}
    \ddt \| h \|_p^p
    \leq
    - \frac{1}{2} C_4 \|h\|_p^{p/\mu}
    + C_3 \|h\|_p^p.
  \end{equation*}
  Since $\mu < 1$, this differential inequality for $\|h\|_p^p$
  shows that $\|h\|_p$ is bounded uniformly in time by a certain
  constant $K$ which may depend also on $\|h_0\|_p$. (Notice that the
  $(1+t)$ factor is not needed in this case).

  \medskip \noindent \textbf{Proof for $d=2$.} Take any
  $2 < p < 4$, and choose $s, q$ with $1 < s < \frac{2q}{p}$. Using
  the Gagliardo-Nirenberg-Sobolev inequality we have
  \begin{equation*}
    \| h \|_{q}^\frac{p}{2}= 
    \| h^\frac{p}{2} \|_{\frac{2q}{p}} 
    \leq C \| \nabla h^\frac{p}{2}\|_2^\theta 
    \| h^\frac{p}{2}\|_s^{1-\theta}
    + C \|h^{\frac{p}{2}}\|_1
    \leq
    C \| \nabla h^\frac{p}{2}\|_2^\theta 
    \ \| h \|_{\frac{p s}{2}}^\frac{p(1-\theta)}{2}
    +
    C \|h\|_p^{\frac{p}{2}}
  \end{equation*}
  with 
  $$
  \frac{p}{2q}=\theta \frac{d-2}{2d}+\frac{1-\theta}{s}.
  $$
  Rearranging the terms and choosing $s = 4/p > 1$ (which implies
  $q > 2$) we have
  \begin{equation}
    \label{eq:diff-bound-d<3}
    - \| \nabla h^\frac{p}{2}\|_2^2
    \leq
    - C_1 
    \| h \|_{2}^\frac{-p(1-\theta)}{\theta}
    \| h \|_q^\frac{p}{\theta}
    + \|h\|_p^p
  \end{equation}
  with
  $$
  \theta=\frac{dp(q-2)}{q\left(2(2-d)+dp\right)}
  .
  $$
  We will use this bound on the diffusive term in
  \eqref{eq:ddt-Lp-2}. In order to control the $\|h\|_p^p$ term in
  \eqref{eq:ddt-Lp-2} we further bound the diffusive term as follows:
  by interpolation,
  \begin{equation*}
    \| h \|_p
    \le
    \| h \|_1^{\frac{q-p}{p(q-1)}}
    \| h \|_q^{\frac{q(p-1)}{p(q-1)}}
    \leq
    C_2
    \| h \|_q^{\frac{q(p-1)}{p(q-1)}},
  \end{equation*}
  where the last inequality is due to the boundedness of
  $\|h\|_1$. Using this in \eqref{eq:diff-bound-d<3} we have
  \begin{equation}
    \label{eq:lin-bound-d<3}
    \| \nabla h^\frac{p}{2}\|_2^2
    \geq
    C_3 
    \| h \|_{2}^\frac{-p(1-\theta)}{\theta}
    \| h \|_p^{\frac{p}{\theta}\frac{p(q-1)}{q(p-1)}}
    \geq
    C_4 
    \| h \|_p^{\frac{p}{\theta}\frac{p(q-1)}{q(p-1)}}
    - \|h\|_p^p,
  \end{equation}
  now due to the boundedness of $\|h\|_2$.

  In order to control the nonlinear term we interpolate in a similar
  way as before, with any $q>p+1$:
  \begin{equation*}
    \| h \|_{p+1}^{p+1}
    \le
    \| h \|_2^{\frac{2(q-(p+1))}{q-2}}
    \| h \|_q^{\frac{p}{\theta}}
    \| h \|_q^{\frac{q(p-1)}{q-2} - \frac{p}{\theta}}.
  \end{equation*}
  Noticing that the exponent of the last term is nonpositive for
  $d \leq 4$, and that $\|h\|_2$ is bounded by a constant times
  $\|h\|_q$, we have
  \begin{equation}
    \label{eq:nlin-bound-d<3-2}
    \| h \|_{p+1}^{p+1}
    \le
    C_5 \| h \|_2^{p+1-\frac{p}{\theta}}
    \| h \|_q^{\frac{p}{\theta}}.
  \end{equation}
  Now we use \eqref{eq:diff-bound-d<3}, \eqref{eq:lin-bound-d<3} and
  \eqref{eq:nlin-bound-d<3-2} in \eqref{eq:ddt-Lp-2}, breaking again
  the diffusive term into two halves:
  \begin{multline*}
    \ddt \| h \|_p^p
    \leq
    - C_6 \| \nabla h^{\frac{p}{2}} \|_{2}^2
    + C_{10} \|h\|_p^p
    + C_N \| h \|_{p+1}^{p+1}
    \\
    \leq
    - C_7 \| h \|_{2}^\frac{-p(1-\theta)}{\theta}
    \| h \|_q^\frac{p}{\theta}
    - C_8 \| h \|_p^{\frac{p}{\theta}\frac{p(q-1)}{q(p-1)}}
    + C_{10} \|h\|_p^p
    + C_9 \| h \|_2^{p+1-\frac{p}{\theta}}
    \| h \|_q^{\frac{p}{\theta}}.
    \\
    =
    - C_8 \| h \|_p^{\frac{p}{\theta}\frac{p(q-1)}{q(p-1)}}
    + C_{10} \|h\|_p^p
    - \| h \|_q^\frac{p}{\theta}
    \left(
      C_7 \| h \|_{2}^\frac{-p(1-\theta)}{\theta}
      -
      C_9 \| h \|_2^{p+1-\frac{p}{\theta}}
    \right)
  \end{multline*}
  The term in parentheses is nonnegative when
  $$
  \| h \|_2^{\frac{p(1-\theta)}{\theta}+p+1-\frac{p}{\theta}} 
  = \| h \|_2\le \frac{C_7}{C_9},
  $$
  which holds for all $t \geq 0$ as long as it holds for the initial
  condition, since $\|h\|_2$ is nonincreasing in time. Assuming this
  we finally have
  \begin{equation*}
    \ddt \| h \|_p^p
    \leq
    - C_8 \| h \|_p^{\frac{p}{\theta}\frac{p(q-1)}{q(p-1)}}
    + C_{10} \|h\|_p^p.
  \end{equation*}
  Using that $\theta \le \frac{p(q-1)}{q(p-1)}$, we see that this
  differential inequality shows that $\|h\|_p$ is uniformly bounded in
  time. Since this is valid for $2 \leq p < 4$, this shows that all
  the nonlinear terms in \eqref{eq:around-eq} are uniformly bounded in
  $L^s(\Omega)$ for $1 \leq s < 2$ (and of course the linear ones are
  as well). Lemma \ref{lem:heat-regularisation} gives the result for
  all $p$.
\end{proof}

\medskip We are finally in position to complete the proof of Theorem
\ref{thm:exponential-Lp}:

\begin{proof}[Proof of Theorem \ref{thm:exponential-Lp}]
  Theorem \ref{thm:L2-convergence} and Proposition \ref{prp:Lp-bounds}
  give the $L^p$ decay since
  $$\| h \|_p \le \| h \|_2^\theta \| h \|_{p+1}^{1-\theta} \le C \| h
  \|_2^\theta\le \tilde{C} e^{-\lambda \theta t}.$$
  If we assume that $a_{i,0} \in L^\infty(\Omega)$ for $i = 1,\dots,4$
  then this implies that all the nonlinear terms of equation
  \eqref{eq:4spec1-2} are in $L^p(\Omega)$, for all
  $1 \leq p < \infty$. Lemma \ref{lem:heat-regularisation} then shows
  that the $a_i$ are bounded in $L^\infty(\Omega)$ (using the decay of
  the $L^p$ norms, one may check the proof to see that the $L^\infty$
  bound does not depend on time). Classical bootstrap arguments then
  show that the solution $(a_i)_{i=1,\dots,4}$ is in fact infinitely
  differentiable for $t > 0$.
\end{proof}

\section{General quadratic systems with detailed balance}
\label{sec:general}

The arguments in the previous sections can be adapted to any
reaction-diffusion system with at most quadratic nonlinearities, with
the caveat that the constant $\lambda$ giving the speed of convergence
in Theorem \ref{thm:exponential-Lp} cannot be estimated in a
constructive way. One can however show that the linearised system
still has a positive spectral gap (with no estimate on its size) and
relate the speed $\lambda$ to the size of this spectral gap.

\subsection{General setting and main result}
\label{sec:general-setting}

Let us first describe the setting in which our result holds. We
consider a system of $I \geq 2$ species, denoted $A_1, \dots, A_I$,
undergoing a number $R \geq 1$ of different reactions:
\begin{equation}
  \label{eq:reactions-general}
  \alpha_1^r A_1 + \dots + \alpha_N^r A_I
  \
  \underset{k_b^r}{\overset{k_f^r}{\rightleftharpoons}}
  \
  \beta_1^r A_1 + \dots + \beta_N^r A_I,
  \qquad
  r = 1, \dots, R.
\end{equation}
The description of these processes requires somewhat heavy notation,
which makes clear the reason why we have chosen to present our
results first for the four-species system. We follow the clear
presentation given in \citet{FellnerTang}. The positive numbers
$k_f^r$ and $k_b^r$, for $r=1 \dots, R$, denote the forward and
backward reaction rates, respectively, for each of the $R$
reactions. The vectors $\alpha^r = (\alpha_1^r, \dots, \alpha_I^r)$
and $\beta^r = (\beta_1^r, \dots, \beta_I^r)$ are the
\emph{stoichiometric coefficients} which specify the number of
particles of each species that take part in each reaction. Let us
denote by $a_i = a_i(t,x)$ the concentration of the species $i$ at time
$t$ and position $x \in \Omega$ (for $i = 1, \dots, I$). If the
diffusion coefficient of the species $A_i$ is $d_i > 0$ then the
evolution equation that describes the concentrations
$a = (a_i)_{i = 1,\dots, I}$ is
\begin{equation}
  \label{eq:rd-general}
  \partial_t a_i = d_i \Delta a_i - R_i(a),
  \qquad
  i = 1, \dots, I
\end{equation}
with no-flux boundary conditions as before,
\begin{equation}
  \label{eq:rd-general-boundary}
 \nabla_x a_i (t,x) \cdot \nu (x) = 0,
  \qquad t > 0, \ x \in \partial \Omega, \ i = 1, \dots, I,
\end{equation}
where $\mathbf{R}(a) = (R_1(a), \dots, R_I(a))$ is given by
\begin{equation*}
  \mathbf{R}(a) = \sum_{r=1}^R ( k_f^r a^{\alpha^r} - k_b^r a^{\beta^r} )
  (\alpha^r - \beta^r).
\end{equation*}
Here we use the multiindex notation to write
\begin{equation*}
  a^{\alpha} = \prod_{i=1}^I a_i^{\alpha_i},
  \qquad r = 1,\dots, R,
\end{equation*}
for any vector $\alpha$ with $I$ components. We also set an initial
condition for the $a_i$:
\begin{equation}
  \label{eq:initial-condition-general}
  a_i(0,x) = a_i^0(x), \qquad
  x \in \Omega,\quad
  i = 1, \dots, I.
\end{equation}
One can find the conserved quantities of this system as follows:
define the $R \times I$ matrix $W$ by writing the vectors
$\beta^r - \alpha^r$ as rows:
\begin{equation*}
  W :=
  \left(
    \begin{matrix}
      \beta^1 - \alpha^1
      \\
      \vdots
      \\
      \beta^R - \alpha^R
    \end{matrix}
  \right),
\end{equation*}
and take a matrix $\mathbb Q$ whose rows are a basis of
$(\operatorname{Im} W^\top)^{\top} = \operatorname{Ker} W$ (the
orthogonal complement of the image of $W^\top$ in the usual scalar
product or, equivalently, the kernel of $W$). This ensures that
$\mathbb{Q} W^\top = 0$, and since we can write
\begin{equation*}
  \mathbf{R}(a) = W^\top \mathbf{K}(a),
\end{equation*}
where
$\mathbf{K}(a) := (k_f^r a^{\alpha^r} - k_b^r
a^{\beta^r})_{r=1,\dots,R}$, , we see that the vector
\begin{equation*}
  \int_\Omega \mathbb{Q} a(t,x) \dx
\end{equation*}
is conserved along the evolution \eqref{eq:rd-general} (see
\citet{FellnerTang} for details). An \emph{equilibrium} of the system
\eqref{eq:rd-general} is a vector
$a_\infty = (a_{1,\infty}, \dots, a_{I,\infty})$ with nonnegative
entries such that $\mathbf{R}(a_\infty) = 0$. A \emph{detailed balance
  equilibrium} of \eqref{eq:rd-general} is a vector
$a_\infty = (a_{1,\infty}, \dots, a_{I,\infty})$ with strictly
positive entries such that
\begin{equation}
  \label{eq:db-condition}
  k_f^r a_\infty^{\alpha^r} - k_b^r a_\infty^{\beta^r} = 0
  \qquad \text{for all $r=1,\dots,R$.}
\end{equation}
For illustration, the case of the four-species system is included in
this setting: it corresponds to $R = 1$, $I=4$,
$\alpha^1 = (1,0,1,0)$, $\beta^1 = (0,1,0,1)$,
\begin{equation*}
  W = (-1, 1, -1, 1)
\end{equation*}
and we may choose
\begin{equation*}
  \mathbb{Q} =
  \left(
    \begin{matrix}
      1 & 1 & 0 & 0
      \\
      1 & 0 & 0 & 1
      \\
      0 & 1 & 1 & 0
    \end{matrix}
  \right),
\end{equation*}
which corresponds to the conserved quantities $M_{12}$, $M_{14}$ and
$M_{23}$ as described in Section \ref{sec:assumptions}. Since in this
case there is only one pair of reactions taking place, every
equilibrium with positive entries is a detailed balance equilibrium.

\medskip

Let us describe our assumptions. Our first set mimics the conditions
we used for the four-species system: we always assume Hypothesis
\ref{hyp:Omega} on the domain $\Omega$ and the following positivity
conditions on the parameters:

\begin{hyp}[Positivity of the constants]
  \label{hyp:positivity}
  $I \geq 2$ and $R \geq 1$ are integers, and the diffusion rates
  $d_1, \dots, d_I$ are strictly positive numbers. The reaction
  constants $k^r_f$, $k^r_b$ are strictly positive numbers for
  $r = 1, \dots, R$. For $r=1, \dots, R$ the vectors
  $\alpha^r, \beta^r \in \R^I$ have nonnegative integer coordinates.
\end{hyp}

We always need to assume that the system has at most quadratic
nonlinearities; that is, that each of the reactions happens with at
most two reactants:

\begin{hyp}[The system is at most quadratic]
  \label{hyp:quadratic}
  The sum of the coordinates of each of the vectors
  $\alpha^r, \beta^r \in \R^I$, $r = 1, \dots, R$, is less or equal
  than $2$. That is,
  \begin{equation*}
    \sum_{i=1}^I \alpha_i^r \leq 2,
    \quad
    \sum_{i=1}^I \beta_i^r \leq 2,
    \qquad \text{for $r = 1, \dots, R$.}
  \end{equation*}
\end{hyp}

The previous hypothesis is just a way of saying that each reaction
must have either (a) one reactant, so the resulting term in
$\mathbf{R}(a)$ is linear, or (b) two reactants, in which the
resulting term is quadratic. These two reactants could be the same
one, so one of the components of $\alpha^r$ or $\beta^r$ can be $2$
(and in that case the rest of them must be $0$).

Given the vectors $\alpha^r$, $\beta^r$ for $r=1,\dots,R$ we can
define a matrix $\mathbb{Q}$ of ``conserved quantities'' as described
at the beginning of this section. We fix such a matrix $\mathbb{Q}$
for the rest of this paper. The following assumption corresponds to
Hypothesis \ref{hyp:data} in this general setting: it says that all
conserved quantities corresponding to the initial data are strictly
positive:

\begin{hyp}[Positivity of the initial condition]
  \label{hyp:general-a0}
  The functions $a_{1,0}, \dots, a_{I,0} \: \Omega \to \R$ are in
  $L^2(\Omega)$, are nonnegative, and satisfy that
  \begin{equation*}
    \int_\Omega \mathbb{Q} a_0(x) \dx
    \quad \text{has strictly positive components},
  \end{equation*}
  where $a_0 := (a_{1,0}, \dots, a_{I,0})$.
\end{hyp}
Finally a crucial assumption is that the system associated to the
reactions \eqref{eq:reactions-general} satisfies the detailed balance
condition:
\begin{hyp}[Detailed balance]
  \label{hyp:db-general}
  The system \eqref{eq:rd-general} has a detailed balance equilibrium
  $a_\infty = (a_{1,\infty}, \dots, a_{I,\infty})$ satisfying the
  condition that
  \begin{equation}
    \label{eq:equilibrium-conservations}
    \mathbb{Q} a_\infty = \int_\Omega \mathbb{Q} a_0(x) \dx.
  \end{equation}
  (That is, such that $\int_\Omega (a_\infty - a_0)$ is in the image
  of $W^\top$).
\end{hyp}
To avoid confusion: we are assuming that there is a detailed balance
equilibrium (one whose components are strictly positive and satisfy
\eqref{eq:db-condition}), but we are not assuming it must be the only
one. In particular, we are not ruling out the existence of ``boundary
equilibria'', that is, equilibria with some components equal to $0$
(satisfying \eqref{eq:equilibrium-conservations}). These boundary
equilibria are an important obstacle in the study of the global
behaviour of reaction equations in general (even without diffusion)
and are closely linked to the \emph{global attractor conjecture}
\citep{Horn1972General}.  Some results on reaction-diffusion systems
with boundary equilibria have been recently investigated in
\citet{DFT2017}. Our statement in this general setting is that
solutions in dimension $d \leq 4$ which are close to a detailed
balance equilibria are regular (classical) and relax exponentially to
equilibrium. In other words, detailed balance equilibria are always
locally asymptotically stable, at least in dimension $4$ and below:

\begin{thm}
  \label{thm:exponential-Lp-general}
  Assume Hypothesis \ref{hyp:Omega} and Hypotheses
  \ref{hyp:positivity}--\ref{hyp:db-general}, and let $d\leq 4$. Let
  $(a_i)_{i=1,\dots,I}$ be a solution to the system
  \eqref{eq:rd-general}--\eqref{eq:initial-condition-general}. Let
  $a_{i,\infty}$, $i = 1, \dots, I$, denote a detailed balance
  equilibrium of \eqref{eq:rd-general} with the same invariants as
  $(a_{i,0})_{i = 1, \dots, I}$ (that is, satisfying
  \eqref{eq:equilibrium-conservations}).

  Then for any $2 \leq p < \infty$ there exist positive constants
  $\lambda, K, \epsilon > 0$ depending on $p$, $d$, $(d_i)_{i=1,\dots,I}$,
  $(a_{i,\infty})_{i = 1, \dots, I}$, $\Omega$ and $\|a_0\|_p$ such
  that
  \begin{equation*}
    \sum_{i=1}^I \int_\Omega \frac{|a_i(t,x) -
      a_{i,\infty}|^p}{a_{i,\infty}} \dx
    \leq
    K e^{-\lambda t}
  \end{equation*}
  whenever the $a_{i,0}$ satisfy
  \begin{equation*}
    \sum_{i=1}^I \int_\Omega \frac{|a_{i,0}(x) -
      a_{i,\infty}|^2}{a_{i,\infty}} \dx
    \leq
    \epsilon.
  \end{equation*}
  As a consequence, if $a_{i,0} \in L^\infty(\Omega)$, then the
  solution to
  \eqref{eq:rd-general}--\eqref{eq:initial-condition-general} is
  uniformly bounded in $L^\infty(\Omega)$, and is a classical solution
  for $t > 0$.
\end{thm}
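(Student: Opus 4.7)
The plan is to follow the same architecture used for the four-species system in Sections \ref{sec:linearised}--\ref{sec:regularity}, replacing each ingredient by its natural general-system analogue. Writing $a_i = a_{i,\infty} + h_i$ and using the detailed balance condition \eqref{eq:db-condition}, the linearised equation around $a_\infty$ takes the form $\partial_t h = D h + L h =: T h$, where $L$ is a bounded linear operator on $(L^2(\Omega))^I$ whose structure inherits, through differentiation of $\mathbf{R}(a)$ at $a_\infty$, the symmetry encoded by detailed balance. Using the second variation of the relative free energy
\begin{equation*}
  H(a\vert a_\infty) := \sum_{i=1}^I \int_\Omega \Bigl( a_i \log \frac{a_i}{a_{i,\infty}} - a_i + a_{i,\infty}\Bigr),
\end{equation*}
one checks that $\langle h, T h \rangle$ equals minus the sum of a diffusive Dirichlet form $\sum_i d_i \int_\Omega \vert\nabla h_i\vert^2/a_{i,\infty}$ and a reaction quadratic form $\sum_r k_f^r a_\infty^{\alpha^r} \int_\Omega \bigl(\sum_i \alpha_i^r h_i/a_{i,\infty} - \sum_i \beta_i^r h_i/a_{i,\infty}\bigr)^2$, both nonnegative, with the conservation constraint $\int_\Omega \mathbb{Q} h = 0$.

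The main obstacle, and the reason why the constants become non-constructive, is establishing the analogue of Lemma \ref{lem:gap-strong}: a coercivity estimate $\langle h, T h\rangle \leq -\lambda(\Vert h\Vert_2^2 + \Vert\nabla h\Vert_2^2)$ on the subspace of $h$ satisfying $\int_\Omega \mathbb{Q} h = 0$. I would prove this by a standard compactness/contradiction argument: suppose no such $\lambda$ exists; then one can extract a sequence $h^n$ with $\Vert h^n\Vert_2 = 1$, $\int_\Omega \mathbb{Q} h^n = 0$, and $\langle h^n, T h^n\rangle \to 0$. The diffusive part forces each component $h_i^n$ to be asymptotically spatially constant (by Poincaré), so along a subsequence $h^n \to h^*$ strongly in $L^2$, with $h^*$ a constant vector. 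The vanishing of the reaction form at the limit forces $h^*$ to lie in $\mathrm{Ker}(W)$, i.e. it is a conserved mode; but the constraint $\mathbb{Q} h^* = 0$ then forces $h^* = 0$, contradicting $\Vert h^*\Vert_2 = 1$. This yields some $\lambda > 0$ whose size we cannot estimate in general, which is precisely the limitation announced in the statement.

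With this spectral gap in hand, the proof of the $L^2$ stability result proceeds exactly as in Theorem \ref{thm:L2-convergence}: the nonlinear part $N(h) := \mathbf{R}(a_\infty+h) - Lh$ is a finite sum of quadratic terms $h_i h_j$ by Hypothesis \ref{hyp:quadratic}, and Hölder bounds $\langle h, N(h)\rangle \leq C \Vert h\Vert_3^3$, which combined with the Gagliardo-Nirenberg interpolation in dimension $d\leq 4$ gives a differential inequality for $\Vert h\Vert_2^2$ that makes $\Vert h\Vert_2$ strictly decreasing and exponentially small once $\Vert h_0\Vert_2 \leq \epsilon$. The $L^p$-bootstrap of Proposition \ref{prp:Lp-bounds} then applies verbatim, since every estimate there relied only on the inequality $\int_\Omega \vert h_i\vert^{p-1} \vert h_j\vert\vert h_k\vert \leq C \Vert h\Vert_{p+1}^{p+1}$ and the same Gagliardo-Nirenberg-Sobolev chain. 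Finally, combining the $L^2$ exponential decay with the uniform-in-time $L^p$ bound via interpolation $\Vert h\Vert_p \leq \Vert h\Vert_2^{\theta} \Vert h\Vert_{p+1}^{1-\theta}$ gives the exponential $L^p$-decay for all $p<\infty$, and Lemma \ref{lem:heat-regularisation} applied to each component $a_i$ (whose nonlinear source now lies in $L^p$ for every $p$) produces the $L^\infty$ bound and, by classical parabolic bootstrap, the regularity claim.
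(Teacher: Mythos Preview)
Your proposal is correct and follows the paper's own architecture almost exactly: linearise, establish a spectral gap for $T = D+L$ on the constrained subspace, run the $L^2$ argument of Theorem \ref{thm:L2-convergence} using $\ap{h,N(h)}\le C\|h\|_3^3$ and Gagliardo--Nirenberg, bootstrap to $L^p$ via Proposition \ref{prp:Lp-bounds}, interpolate for decay, and finish with Lemma \ref{lem:heat-regularisation}.

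The only genuine difference is in how you obtain the coercivity of $T$. The paper isolates the finite-dimensional reaction part: it shows directly that $L$, viewed as a symmetric operator on $\operatorname{Im} W^\top\subset\R^I$ in the weighted inner product, has strictly negative spectrum (if $\ap{Lh,h}=0$ then $h$ is $\ap{\cdot,\cdot}$-orthogonal to every $\alpha^r-\beta^r$, hence to $\operatorname{Im}W^\top$, hence $h=0$). It then combines this with Poincar\'e exactly as in Lemmas \ref{lem:gap}--\ref{lem:gap-strong}. You instead run a compactness/contradiction argument directly on the full operator $T$. Both routes are valid and both are non-constructive; the paper's version has the advantage of locating precisely where the non-constructiveness enters (the finite-dimensional eigenvalue problem for $L$), while yours is more self-contained. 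One small imprecision: the vanishing of the reaction form gives $Wv^*=0$ for $v_i^*=h_i^*/a_{i,\infty}$, not $Wh^*=0$; equivalently, $h^*$ is orthogonal to $\operatorname{Im}W^\top$ in the \emph{weighted} inner product. Combined with $\mathbb{Q}h^*=0$, i.e.\ $h^*\in\operatorname{Im}W^\top$, this still yields $h^*=0$, so the conclusion stands.
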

One important difference with Theorem \ref{thm:exponential-Lp} is that
in this generality the constant $\lambda$ cannot be estimated in any
constructive way. We are able to prove that there exists $\lambda$
which satisfies the theorem, but no explicit way is given to estimate
the value of $\lambda$ based on the parameters of the problem. On the
other hand, one cannot expect an estimate in full generality. It
should be noted that the system \eqref{eq:rd-general} contains as a
particular case the Kolmogorov forward equations for a discrete Markov
process with jump rates $k^r_f$, $k^r_b$. (This corresponds to the
case of homogeneous solutions, with reactions where $\alpha^r$ and
$\beta^r$ have only one nonzero entry, and this entry is equal to $1$,
yielding a linear equation). Estimating the speed of convergence to
equilibrium for detailed balance discrete Markov processes is a broad
problem in its own right and it is out of the scope of this paper.

\medskip The rest of this section contains the proof of Theorem
\ref{thm:exponential-Lp-general}, and we always assume the hypotheses
of the theorem. The first observation towards it is to notice that the
entropy structure is maintained by the detailed balance condition:
using \eqref{eq:db-condition} one sees that a (regular) solution to
\eqref{eq:rd-general}--\eqref{eq:rd-general-boundary} satisfies
\begin{multline}
  \label{eq:entropy-general}
  \ddt \sum_{i=1}^I \int_\Omega
  a_{i,\infty} ( u_i \log u_i - u_i + 1 ) \dx
  \\
  =
  - \sum_{i=1}^I \int_\Omega d_i \frac{|\nabla a_i|^2}{a_i} \dx
  -\sum_{r=1}^R \int_\Omega
  k_f^r a_\infty^{\alpha^r} (u^{\alpha^r} - u^{\beta^r})
  (\log u^{\alpha^r} - \log u^{\beta^r}) \dx,
\end{multline}
where we call $u := (u_1, \dots, u_I)$ with
\begin{equation*}
  u_i \equiv u_i(t,x) := \frac{a_i(t,x)}{a_{i,\infty}}
  \qquad \text{for $i=1,\dots,I$.}
\end{equation*}
We follow the same steps as for the four-species system, indicating
only the differences where the arguments are the same.

\subsection{The linearised system}
\label{sec:linearised-general}

Considering the linearisation of \eqref{eq:rd-general} around the
equilibrium $a_\infty = (a_{i,\infty})_{i=1,\dots,I}$ as in Section
\ref{sec:linearised} at first order we obtain the following evolution
equation for the perturbation $h = (h_i)_{i=1,\dots,I}$ (defined so
that $a_i = a_{i,\infty} + h_i$ for all $i$):
\begin{equation}
  \label{eq:linearised-general}
  \partial_t h_i = d_i \Delta h_i + L_i h,
\end{equation}
still with no-flux boundary conditions
\begin{equation}
  \label{eq:linearised-general-boundary}
 \nabla_x h_i (t,x) \cdot \nu (x) = 0,
  \qquad t > 0, \ x \in \partial \Omega, \ i = 1, \dots, I,
\end{equation}
where the linear operator $L h = (L_i h)_{i=1,\dots,I}$ is given by
\begin{equation}
  \label{eq:L_i-general}
  L_i h = - \sum_{j=1}^I \sum_{r=1}^R k_f^r a_\infty^{\alpha^r}
  (\alpha_j^r - \beta_j^r) (\alpha_i^r - \beta_i^r) \frac{h_j}{a_{j,\infty}},
  \qquad i = 1, \dots, I
\end{equation}
or, using matrix notation,
\begin{equation*}
  L h = - \sum_{r=1}^R k_f^r a_\infty^{\alpha^r}
  (\alpha^r - \beta^r)^{\top} (\alpha^r - \beta^r) v,
\end{equation*}
with
\begin{equation*}
  v \equiv (v_1, \dots, v_i)
  := \left( \frac{h_1}{a_{1,\infty}}, \dots, \frac{h_I}{a_{I,\infty}} \right).
\end{equation*}
We can write \eqref{eq:linearised-general} as
\begin{equation}
  \label{eq:linearised-general-abstract}
  \partial_t h = D h + L h =: T h,
\end{equation}
where $D h = (d_i \Delta h_i)_{i = 1, \dots, I}$. Due to the
expression of $L$ it is clear that $\mathbb{Q}L h = 0$ for all $h$, so
solutions to
\eqref{eq:linearised-general}--\eqref{eq:linearised-general-boundary}
satisfy the same conservation laws as
\eqref{eq:rd-general}--\eqref{eq:rd-general-boundary}:
\begin{equation*}
  \int_\Omega \mathbb{Q} h(t,x) \dx = \int_\Omega \mathbb{Q} h(0,x)
  \dx
  \qquad \text{for all $t \geq 0$.}
\end{equation*}
This linearised system ``inherits'' an entropy structure from
\eqref{eq:entropy-general}. Namely,
\begin{multline}
  \label{eq:entropy-lnearised-general}
  \ddt \sum_{i=1}^I \int_\Omega \frac{1}{a_{i,\infty}} h_i^2
  =
  2 \sum_{i=1}^I \int_\Omega \frac{1}{a_{i,\infty}} h_i T_i h
  \\
  =
  - 2 \sum_{i=1}^I d_i \int_\Omega \frac{|\nabla h_i|^2}{a_{i,\infty}} \dx
  - 2 \sum_{r=1}^R \int_\Omega  k_f^r a_\infty^{\alpha^r}
  \left(
    \sum_{i=1}^I (\alpha_i^r - \beta_i^r) \frac{h_i}{a_{i,\infty}}
  \right)^2 \dx.
\end{multline}
Disregarding for the moment the spatial dependence, we may consider
the operator $L$ defined on the vector space $\R^I$. Since
$\mathbb{Q}Lh = 0$ for all $h$, we may restrict $L$ to the vector
space $X = \operatorname{Im} W^\top$. Our first result is that the
linear operator $L$ has negative spectrum in this space (and as an
immediate consequence, detailed balance equilibria are always
isolated). This is known since at least \citet{Horn1972General}, but
we prove it here for completeness:

\begin{lem}
  Assume the conditions of Theorem
  \ref{thm:exponential-Lp-general}. There exists a constant
  $\lambda > 0$ such that the spectrum of the linear operator
  $L \: \operatorname{Im} W^\top \to \operatorname{Im} W^\top$ defined by
  \eqref{eq:L_i-general} is contained in $(-\infty, -\lambda)$.
\end{lem}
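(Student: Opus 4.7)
The plan is to recognise the linearised reaction operator $L$ as a self-adjoint, negative semi-definite matrix with respect to the weighted inner product $\langle h, f\rangle := \sum_{i=1}^I h_i f_i / a_{i,\infty}$, and then observe that its kernel is a direct sum complement of $\operatorname{Im} W^\top$. Once these two structural facts are in place, the existence of a spectral gap is immediate from finite-dimensional spectral theory.

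First I would rewrite \eqref{eq:L_i-general} as a matrix product. Letting $D := \operatorname{diag}(a_{1,\infty}, \dots, a_{I,\infty})$ and $K := \operatorname{diag}(k_f^r a_\infty^{\alpha^r})_{r=1,\dots,R}$, and using that the rows of $W$ are $\beta^r - \alpha^r$, a direct computation from the definition gives
\begin{equation*}
  L h = -W^\top K W D^{-1} h.
\end{equation*}
Three consequences follow at once. First, $\operatorname{Im} L \subseteq \operatorname{Im} W^\top$, so $\operatorname{Im} W^\top$ is $L$-invariant and the restriction in the statement is well-defined. Second, $L$ is self-adjoint for $\langle\cdot,\cdot\rangle$, since $\langle h, Lf\rangle = -h^\top D^{-1} W^\top K W D^{-1} f$ is symmetric in $h$ and $f$. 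Third, the quadratic form
\begin{equation*}
  \langle h, L h\rangle = -\bigl\| K^{1/2} W D^{-1} h \bigr\|^2 \leq 0
\end{equation*}
is non-positive and coincides with the non-gradient part of the dissipation already written in \eqref{eq:entropy-lnearised-general}, as a sanity check. Note that the detailed-balance hypothesis enters only through the positivity of the diagonal entries of $K$ and $D$.

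The key step is to upgrade $\langle h, L h\rangle \leq 0$ to strict negativity on $\operatorname{Im} W^\top\setminus\{0\}$. Suppose $h = W^\top y \in \operatorname{Im} W^\top$ satisfies $\langle h, L h\rangle = 0$. Since $K$ is positive definite this forces $W D^{-1} h = W D^{-1} W^\top y = 0$, and then
\begin{equation*}
  0 = y^\top W D^{-1} W^\top y = \bigl\| D^{-1/2} W^\top y \bigr\|^2,
\end{equation*}
so $W^\top y = 0$, i.e.\ $h = 0$. Thus the restriction $L|_{\operatorname{Im} W^\top}$ is self-adjoint and strictly negative definite on a finite-dimensional space, so its spectrum consists of finitely many strictly negative real eigenvalues, and any $\lambda$ smaller than the smallest of their absolute values satisfies the conclusion.

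The argument is essentially routine linear algebra, so the only real obstacle is the bookkeeping: one must choose the weighted inner product dictated by the relative entropy (rather than the Euclidean one) so that self-adjointness of $L$ holds, and one must use the factorisation $h = W^\top y$ to exploit that $W D^{-1} W^\top$ is positive definite on $(\operatorname{Ker} W^\top)^\perp$. As expected, the argument produces no quantitative estimate for $\lambda$ in terms of the reaction data, consistent with the non-constructive nature of Theorem \ref{thm:exponential-Lp-general}.
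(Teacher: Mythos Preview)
Your proof is correct and follows essentially the same route as the paper: identify $L$ as self-adjoint and negative semidefinite in the weighted inner product, then show the quadratic form vanishes on $\operatorname{Im} W^\top$ only at $h=0$, concluding by finite-dimensional spectral theory. Your matrix factorisation $Lh = -W^\top K W D^{-1} h$ makes the structure slightly more transparent than the paper's summation notation, and you also make explicit the invariance $\operatorname{Im} L \subseteq \operatorname{Im} W^\top$ that the paper leaves implicit, but the substance is identical.
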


\begin{proof}
  One sees from \eqref{eq:L_i-general} that the matrix of $L$ is
  symmetric in the scalar product associated to the norm
  \begin{equation}
    \label{eq:L2-norm-adapted}
    \|h\|^2 := \sum_{i=1}^I \frac{1}{a_{i,\infty}} h_i^2.
  \end{equation}
  Denote this inner product by $\ap{\cdot, \cdot}$. We emphasise that
  in this proof $\|\cdot\|$ and $\ap{\cdot, \cdot}$ denote a norm and
  inner product in $\R^I$, while in the rest of the section they
  denote a norm in $(L^2(\Omega))^I$ (that is, with an additional
  integral in $\Omega$). Due the symmetry of $L$ in this inner
  product, all eigenvalues of $L$ are real. From the identity
  \begin{multline*}
    \ap{L h,h}
    = \sum_{i=1}^I \frac{1}{a_{i,\infty}} h_i L_i h
    = 
    - \sum_{r=1}^R k_f^r a_\infty^{\alpha^r}
    \left(
      \sum_{i=1}^I (\alpha_i^r - \beta_i^r) \frac{h_i}{a_{i,\infty}}
    \right)^2
    \\
    = 
    - \sum_{r=1}^R k_f^r a_\infty^{\alpha^r}
    \Big(
      \ap { \alpha^r - \beta^r, h}
    \Big)^2
    \leq 0
  \end{multline*}
  we see that all eigenvalues of $L$ are contained in $(-\infty,
  0]$.
  Finally, if there is $h \in \operatorname{Im} W^\top$ such that
  $L h = 0$, the latter identity shows that
  \begin{equation*}
    \ap { \alpha^r - \beta^r, h}
    \quad \text{for all $r=1,\dots,R$}.
  \end{equation*}
  This shows that the vector $h = ({h_i})_{i=1,\dots,I}$ is
  perpendicular to the image of $W^\top$ in
  $\ap{\cdot,\cdot}$. Since $h$ is also in the image of
  $W^\top$, we deduce that $h=0$. Hence $0$ is not an eigenvalue of
  $L$ in $\operatorname{Im} W^\top$, and all eigenvalues of $L$ in
  $\operatorname{Im} W^\top$ must be strictly negative. Since $L$ has
  a finite number of eigenvalues, the statement is proved (with no
  constructive estimate on $\lambda$).
\end{proof}

\begin{rem}
  Hypothesis \ref{hyp:quadratic} is not needed in the above result:
  the linearised operator always has a spectral gap once we avoid all
  conservations of the system. Hypothesis \ref{hyp:quadratic} is
  important later in order to show that the linearisation dominates
  the behaviour of the nonlinear equation.
\end{rem}

Once we have the previous result, all remaining points of the proof of
Theorem \ref{thm:exponential-Lp} for the four-species system are
completely analogous in this general setting. We can write an analogue
of Lemma \ref{lem:gap-strong}, with a completely analogous proof which
we omit:

\begin{lem}
  \label{lem:gap-strong-general}
  Assume the hypotheses of Theorem \ref{thm:exponential-Lp-general}
  and define the linear operator $T$ by \eqref{eq:linearised-general},
  \eqref{eq:L_i-general},
  \eqref{eq:linearised-general-abstract}. Define the norm $\|\cdot\|$
  by \eqref{eq:L2-norm-adapted} and denote by $\ap{\cdot,\cdot}$ the
  associated inner product. There exists $\lambda > 0$ such that
  \begin{equation*}
    \ap{h, Th} \leq -\lambda \|h\|_2^2 -  \lambda \| \nabla h \|_2^2
  \end{equation*}
  for all $h \in (L^2(\Omega))^I$ satisfying
  \begin{equation*}
    \int_\Omega \mathbb{Q} h(x) \dx = 0
  \end{equation*}
  (equivalently, such that
  $\int_\Omega h \in \operatorname{Im} W^\top$). The constant
  $\lambda$ depends only on the dimension $d$, the domain $\Omega$,
  the numbers $a_{i,\infty}$, $i = 1, \dots, I$ and the diffusion
  constants $d_i$, $i = 1, \dots, I$.
\end{lem}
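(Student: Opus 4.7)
The plan is to mimic the two-step proof of Lemma \ref{lem:gap-strong}, with the abstract finite-dimensional spectral gap of $L$ on $\operatorname{Im} W^\top$ (established in the lemma immediately preceding this one) replacing the explicit scalar estimate that was available in the four-species case. First I would prove an $L^2$-coercivity bound $\ap{h, Th} \leq -\lambda_* \|h\|_2^2$ (the analogue of Lemma \ref{lem:gap}), and then upgrade it by absorbing only half of the diffusion on the right-hand side to recover the extra $\|\nabla h\|_2^2$ factor.

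For the first step, I start from the identity \eqref{eq:entropy-lnearised-general} and set $\bar{h}_i := \int_\Omega h_i \dx$ and $\bar{h} := (\bar{h}_i)_{i=1,\dots,I} \in \R^I$. The hypothesis $\int_\Omega \mathbb{Q} h \dx = 0$ reads $\mathbb{Q}\bar{h} = 0$, i.e.\ $\bar h \in \ker \mathbb{Q} = \operatorname{Im} W^\top$, so the previous lemma furnishes $\lambda_0 > 0$ with $-\ap{L\bar h, \bar h}_{\R^I} \geq \lambda_0 \|\bar h\|^2$, where $\|\cdot\|$ on $\R^I$ is the norm \eqref{eq:L2-norm-adapted}. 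Since $|\Omega|=1$, Jensen's inequality applied reaction-by-reaction gives
\begin{equation*}
\sum_{r=1}^R k_f^r a_\infty^{\alpha^r} \int_\Omega \Big(\sum_{i=1}^I (\alpha_i^r - \beta_i^r)\frac{h_i}{a_{i,\infty}}\Big)^2 \dx \geq -\ap{L\bar h, \bar h}_{\R^I} \geq \lambda_0 \|\bar h\|^2,
\end{equation*}
so the reaction contribution to $-2\ap{h, Th}$ dominates $2\lambda_0 \|\bar h\|^2$. Poincaré (combined with $|\Omega|=1$) applied componentwise yields
\begin{equation*}
\sum_{i=1}^I d_i \int_\Omega \frac{|\nabla h_i|^2}{a_{i,\infty}} \dx \geq C_\Omega \min_i d_i \, \|h\|_2^2 - C_\Omega \max_i d_i \, \|\bar h\|^2.
\end{equation*}
As in the proof of Lemma \ref{lem:gap}, I scale the diffusion contribution by a small parameter $\gamma \in (0,1)$ chosen so that $\gamma C_\Omega \max_i d_i < \lambda_0$. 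The indefinite $\|\bar h\|^2$ term then has a positive net coefficient and may be dropped, leaving $\ap{h, Th} \leq -\lambda_* \|h\|_2^2$ with $\lambda_* := \gamma C_\Omega \min_i d_i > 0$.

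For the second step, define $\tilde T := \tfrac{1}{2} D + L$; this has the same algebraic structure as $T$ with halved diffusion constants, so the first step applies verbatim to produce $\ap{h, \tilde T h} \leq -\tilde\lambda \|h\|_2^2$ for some $\tilde\lambda > 0$. By the Neumann boundary condition, $\ap{h, Dh} = -\sum_{i=1}^I (d_i/a_{i,\infty}) \int_\Omega |\nabla h_i|^2 \dx \leq -\min_i d_i \, \|\nabla h\|_2^2$, whence
\begin{equation*}
\ap{h, Th} = \ap{h, \tilde T h} + \tfrac{1}{2}\ap{h, Dh} \leq -\tilde\lambda \|h\|_2^2 - \tfrac{1}{2}\min_i d_i \, \|\nabla h\|_2^2,
\end{equation*}
which is the claimed inequality with $\lambda := \min\{\tilde\lambda, \tfrac{1}{2}\min_i d_i\}$. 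The only genuinely new ingredient is the algebraic gap $\lambda_0$ from the preceding lemma; the main subtlety is simply verifying that the conservation constraint on $h$ places $\bar h$ in $\operatorname{Im} W^\top$ so that this gap can be used, after which everything reduces to the standard Poincaré-plus-Jensen routine and the strict positivity of the $d_i$ enters exactly once, in the Poincaré step.
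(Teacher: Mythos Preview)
Your proposal is correct and follows exactly the route the paper intends: the paper explicitly says the proof is ``completely analogous'' to Lemma~\ref{lem:gap-strong} (which in turn rests on Lemma~\ref{lem:gap}) and omits it, and you have faithfully reconstructed that argument, correctly replacing the explicit four-species computation by the abstract spectral gap of $L$ on $\operatorname{Im} W^\top$ from the preceding lemma. The Jensen step, the Poincar\'e splitting with the small parameter $\gamma$, and the half-diffusion trick all match the paper's template.
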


\begin{rem}
  Again, Hypothesis \ref{hyp:quadratic} plays no role in the above
  result.
\end{rem}

\subsection{Local $L^p$ stability}
\label{sec:local-L2-general}

The local $L^2$ stability of the nonlinear system
\eqref{eq:rd-general}--\eqref{eq:rd-general-boundary} can the be
obtained using the same proof as in Section \ref{sec:L2}. If
$a = (a_1, \dots, a_I)$ is a solution to
\eqref{eq:rd-general}--\eqref{eq:rd-general-boundary} under the
assumptions of Theorem \ref{thm:exponential-Lp-general}, calling
$h = (h_1, \dots, h_I)$ with
\begin{equation*}
  h_i := a_i - a_{i,\infty},
  \qquad i = 1, \dots, I,
\end{equation*}
we can always write
\eqref{eq:rd-general}--\eqref{eq:rd-general-boundary} as
\begin{equation*}
  \partial_t h_i = d_i \Delta h_i + L_i h + N_i h,
  \qquad i = 1, \dots, I,
\end{equation*}
where $N_i h$ contains only quadratic monomials in $h_j$,
$j=1, \dots, I$ due to Hypothesis \ref{hyp:quadratic}. All arguments
in Sections \ref{sec:L2} and \ref{sec:regularity} can be reproduced in
this case, obtaining Theorem \ref{thm:exponential-Lp-general}.

\section*{Acknowledgements}

The authors are supported by the Spanish \emph{Ministerio de Economía
  y Competitividad} and the European Regional Development Fund
(ERDF/FEDER), project MTM2014-52056-P. We would like to thank Laurent
Desvillettes, Klemens Fellner and {\fontencoding{T5}Bảo
  Tăng} for very helpful conversations on the paper.


\bibliography{bibliography}

\end{document}